\def\namedlabel#1#2{\begingroup
    #2%
    \def\@currentlabel{#2}%
    \phantomsection\label{#1}\endgroup
}
\newtheorem{example}{}
\def\chv#1{\{\,#1\,\}}
\def\abs#1{\left\vert #1 \right\vert} 
\def\crt#1{\left[ #1 \right]} 
\def\prt#1{\left( #1 \right)} 
\def\Ind#1{\mathbbm{1}_{#1}}  
\def\x{m}
\newcommand{\gep}{\varepsilon} 
\newcommand{\Rc}{\bar{\mathbb{R}}_+} 
\newcommand{\Pc}{\bar{\mathcal{P}}}
\newcommand{\CL}{C_b}
\newcommand{\Kk}{K}
\newcommand{\pd}[2]{\langle#1, #2\rangle}
\numberwithin{equation}{section}
\theoremstyle{plain}
\newtheorem{thm}{Theorem}[section]
\newtheorem{corollary}{Corollary}[section]
\newtheorem{proposition}{Proposition}[section]
\newtheorem{lemma}{Lemma}[section]
\theoremstyle{remark}
\newtheorem{remark}{Remark}[section]
\newtheorem{definition}{Definition}[section]
\title{Laws of large numbers
  for weighted sums of independent random variables:
a game of mass}
\author{
  
  Luca Avena\footnotemark[1] \,,
  Conrado da Costa\footnotemark[2]
}
\begin{document}

\maketitle

\footnotetext[1]{Mathematical Institute, Leiden University, 
P.O.\ Box 9512, 2300 RA Leiden, The Netherlands}
\footnotetext[2]{
Mathematical Sciences \& Computer Science Building,
Upper Mountjoy Campus,
Stockton Road,
Durham University,
DH1 3LE,
Durham, UK.
}

\begin{abstract}  
We consider weighted sums of independent
random variables regulated by an increment sequence
and provide operative conditions that ensure
strong law of large numbers for such sums to hold
in both the centred and non-centred case.
In the literature, the existing criteria
for the strong law are either implicit or
based on restrictions on the increment sequence.
In our set up  we allow for arbitrary sequence of
increments, possibly random, provided
the random variables regulated by such
increments satisfy some mild concentration conditions.
In the non-centred case, convergence can be translated
into the behavior of a deterministic sequence and
it becomes a game of mass provided the expectation
of the random variables is a function of the
increments.
We show how different limiting scenarios can emerge
by identifying several classes of increments,
for which concrete examples will be offered. 
\end{abstract}

\medskip\noindent 
{\it MSC 2020:}
60F15, 
60G50. 
\\
{\it Keywords:} Law of large numbers,
weighted sums of independent random variables,
Toeplitz matrices.

\section{Setup, literature and overview}

Let $\mathbb{X} := \chv{X_k, k \in \mathbb{N}}$
be a sequence of  independent real valued
random variables with finite mean and
$\mathbb{A} = (a_{n,k}\in \mathbb{R}_+ ; n,k\in\mathbb{N})$
a \emph{Toeplitz summation matrix},
i.e., $\mathbb{A}$ satisfies
\begin{align}
\label{vanish_fixed_k} \lim_{n} a_{n,k} = 0,\\
\label{limitsum1} \lim_{n} \sum_{k} a_{n,k} = 1,\\
\label{bound_absolute_value}  \sup_{n} \sum_{k} \abs{a_{n,k}}< \infty.
\end{align}
In this set up, one seeks conditions
on $\mathbb{X}$ and $\mathbb{A}$
to ensure convergence in probability
or almost sure convergence for the sequence
$\chv{S_n,n \in \mathbb{N}}$, where
\begin{equation}\label{sum}
S_n : = \sum_{k} a_{n,k} X_k.
\end{equation}

This type of questions,
known in the literature as weak/strong
Law of Large Numbers (LLN)
have been investigated since the birth
of probability theory, see~\cite{Ber1713},
and 
has been extensively studied in the XX century,
for different summation methods, e.g. Voronoi sums,
Beurling moving averages see~\cite{Bin17, Bin15},
see also~\cite{Rev67, Kle14}
and references therein for a classical
and a more recent account.
The quest for operative conditions that apply
to a wide range of $(\mathbb{X},\mathbb{A})$
and ensure weak/strong convergence of $S_n$
has been the the subject
of~\cite{JamOrePru65, Pru66, Sto68, Roh71}.

When the elements of $\mathbb{X}$ are i.i.d.
mean zero random variables, the \emph{weak}
LLN is equivalent to
$\lim_n \max_k a_{n,k} = 0$, 
see ~\cite[Theorem 1]{Pru66}.
In \cite[Theorem 2]{Pru66},
the following sufficient conditions
for the \emph{strong} LLN are given: 

\begin{equation}\label{PRUiid}
  \mathbb{E}[X_1^{1 + \frac{1}{\gamma}}]
  <\infty \quad\text{ and }\quad
  \limsup_{n} n^{\gamma }\max_{k} a_{n,k}< \infty,
  \quad\quad \text{ for some } \gamma>0.
\end{equation}

For (mean-zero) independent but not
identically distributed variables,
similar sufficient conditions have
been examined in~\cite{JamOrePru65, Sto68,Roh71}.
In particular, in analogy with the
two conditions in~\eqref{PRUiid},
these references require that the
variables $X_k$'s are stochastically
dominated by a random variable $X_*$
satisfying a moment condition,
and that the associated coefficients
$a_{n,k}$ decay sufficiently fast.
     
Unlike the above references, in this paper
we impose concentration conditions on $\mathbb{X}$
and obtain sufficient conditions for
the weak/strong LLN
when $\limsup_n \max_k a_{k,n}>0$.
Here, as in~\cite{JamOrePru65},
we consider a family of weights,
which we will refer to as \emph{masses},
$\mathbf{m}:=\chv{m_k \in \mathbb{R}_+ , k  \in \mathbb{N}}$.
We assume that the mass sequence $\mathbf{m}$
is such that
\begin{equation}\label{divergent}
\sum_{k \in \mathbb{N}}m_k = \infty.
\end{equation}
Set $M_n : = \sum_{k = 1}^n m_k$ and 
\begin{equation}\label{ToplitzA}
  a_{n,k}: = \begin{cases} \frac{m_k}{M_n}
    & \text{if } k \leq n,\\
0& \text{otherwise}.
\end{cases}
\end{equation}
Conditions~\eqref{limitsum1}
and~\eqref{bound_absolute_value}
hold true by definition.
Also, because~\eqref{divergent}
implies $\lim_n M_n = \infty$ it
follows that \eqref{vanish_fixed_k} is in force and
therefore  $\mathbb{A}$ is a Toeplitz summation
matrix.
We notice in particular that if the sum in
\eqref{divergent} is finite, then no LLN can be expected.
In
fact, if the random variables are not all
constant,
the limit random variable will have finite yet
strictly positive variance,
what precludes convergence to
a constant.
To describe our results we depart from the set up
of~\cite{JamOrePru65} and consider $X_k = X_k(m)$ to be a one parameter family of
random variables.

\noindent{\bf Our first contribution:}
The main goal of this paper is to provide (near)
optimal operative conditions on $\mathbb{X}$
to ensure that for \emph{any} sequence of positive
masses $\mathbf{m} \in \mathbb{R}_+^{\mathbb{N}}$, 
\begin{equation}\label{Sn}
S_n = S_n (\mathbf{m}):= \sum_{k = 1}^n \frac{m_k}{M_n} X_k(m_k) 
\end{equation}
converges to zero as $n \to \infty$ both in a weak and
in a strong sense, going beyond the fast coefficient
decay assumptions made in the existing literature. 
Due to the nature of the coefficients
in~\eqref{ToplitzA} we will refer to the sum
in~\eqref{Sn} as \emph{incremental sum}.

{\bf Starting motivation and applications
  in random media:}
Our original motivation to look at this type of
incremental sums  
came from the analysis
pursued in~\cite{AdH19, ACdCdH19,ACdCdH20}
of the asymptotic speed, and related large deviations, 
of a Random Walk (RW) in a dynamic random media,
referred to as Cooling Random Environment
(CRE).
This model is obtained as a perturbation of another
process, the well-known RWRE, by adding independence
through resetting. 
RWCRE, denoted by $(\overline{Z}_n)_{n\in\mathbb{N}_0}$,
is a patchwork of independent RWRE displacements
over different time intervals.
More precisely, the classical RWRE
consists of a random walk $(Z_n)_{n\in\mathbb{N}_0}$ on
$\mathbb{Z}$ with random transition kernel
given by a random environment
sampled from some law $\mu$ at time
zero. To build the dynamic transition kernel
of RWCRE we fix a partition of time
into disjoint intervals
$\mathbb{N}_0 = \bigcup_{k\in\mathbb{N}} I_k$
then, we sample a sequence of environments from
a given law $\mu$ and assign
them to the time intervals $I_k$.
To obtain the sum in~\eqref{Sn}
we let $ \abs{I_k} = m_k$ and consider $S_n = \overline{Z}_{M_n}/M_n$.
In this case,
$S_{n}$ represents the empirical speed of RWCRE at time $M_n$
and $X_k(m_k)$
represents the displacement
of an independent RWRE
$Z_{m_k}$ after $m_k$ time steps.

This type of time-perturbation of RWRE by resetting in
reality gives rise to a slightly more general sum than
the incremental one in~\eqref{Sn}. Hence we will prove
statements for the above incremental sum but also for
the more general one, referred to as
\emph{gradual sum}, as defined
in~\eqref{lastresampling}--\eqref{gradual} below. 
It is worth saying that this patchwork construction
can be naturally performed to perturb (in time or even
in space)
other models in random media, for example, to describe
polymagnets~\cite{Sulet05} based on
juxtaposition of independent Curie-Weiss
spin-systems~\cite{FriVel17} of relative sizes $m_k$'s.

A first partial analysis of the asymptotic speed for
RWCRE started in~\cite[Thm. 1.5]{AdH19}
and in~\cite[Thm. 1.12]{ACdCdH19}.
The new results here offer a full characterization of
when existence of the asymptotic averages in~\eqref{Sn}
holds
and moreover, when moving from a centered zero-mean
setup, which values can this limit attain.

\noindent{\bf Second contribution:}
Our second  goal is thus to explore in the general
non-centred case structural
conditions on the masses $\mathbf{m}$ that ensure
convergence of the weighted sums.
We will in particular identify different classes of
masses for which the resulting
limit exists and can be characterized, what we will
refer to as \emph{the game of mass}, for which many
concrete examples will be presented.
 
\noindent{\bf Structure of the paper:}
In Section~\ref{results0} we state the general LLNs
for centred random variables:
Theorems~\ref{incrementalweak} and~\ref{incrementalstrong},
respectively, for the weak and the strong laws for the
incremental sum;
Theorem~\ref{gradualstrong} for the more general
gradual sum.  
Section~\ref{game} is devoted to the game of mass were
we study concrete convergence criteria for
non-centred variables. 
A discussion on the nature of the hypotheses,
illustrated by counterexamples, is presented in
Section~\ref{hypothesis}.
Lastly, Section~\ref{proofs} contains the proofs of the
main theorems.
Appendix \ref{boundedapp} covers a technical lemma
adapted from \cite{JamOrePru65} and used in the proof
of Section \ref{sss:incremental}.
\section{LLNs for mean-zero variables} \label{results0}

In what follows all random variables are defined on a
probability space $(\Omega, \mathcal{F}, \mathbb{P})$
and $\mathbb{E}$ denotes expectation with respect to
$\mathbb{P}$.
Let  $\mathbb{X} = \chv{X_{k} (m), m \in \mathbb{R}_+,
  k \in \mathbb{N}}$ 
be a family of integrable random variables  that are
independent in $k$.
Our first statement is the the weak LLN for mean-zero
independent random variables.
\begin{thm}[{\bf Weak LLN}]
\label{incrementalweak} 
Assume that $\mathbb{X}$ satisfies the following
conditions:
\begin{description}
	\item[\namedlabel{C}{C}] \emph{(Centering)}
\begin{equation}\notag
\forall\,m \in \mathbb{R}_+, \, k \in \mathbb{N};\quad \mathbb{E}\crt{X_k(m)} = 0.
\end{equation}
\item[\namedlabel{W1}{W1}]  \emph{(Concentration)} 
\begin{equation}\notag
	\lim_{m\to\infty}    \sup_{k} \mathbb{P}\prt{\abs{X_k(m)}>\gep} =0,
	\quad  \forall \varepsilon>0. 
\end{equation}   
\item[\namedlabel{W2}{W2}]  \emph{(Uniform Integrability)} 
 \begin{equation}\notag
	\lim_{A\to \infty} \sup_{k,m} \mathbb{E} \crt{\abs{X_k(m)}\Ind{\abs{X_k(m)}>A}} = 0. 
\end{equation}
\end{description}
Then, for any sequence
$\mathbf{m} \in \mathbb{R}_+^\mathbb{N}$ that
satisfies~\eqref{divergent},
\begin{equation}\notag
\lim_{n\to\infty}\mathbb{P}\prt{\abs{S_n}>\gep}=0,\quad  \forall \varepsilon>0. 
\end{equation} 	
\end{thm}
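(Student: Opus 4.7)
The main difficulty is that the weights $a_{n,k}=m_k/M_n$ need not satisfy $\max_k a_{n,k}\to 0$, so the classical second-moment bound after truncation cannot succeed on its own. The key observation driving the argument is a dichotomy: whenever $a_{n,k}$ is not small, the mass $m_k$ is necessarily large, and then \ref{W1} forces $X_k(m_k)$ to concentrate near $0$. Following this idea, I would fix a parameter $\delta>0$ and split the index set into a \emph{heavy} part $K_n:=\chv{k\le n:a_{n,k}>\delta}$ and a \emph{light} part $L_n:=\chv{k\le n:a_{n,k}\le\delta}$, writing $S_n=S_n^H+S_n^L$ accordingly.

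For the heavy piece, $\sum_k a_{n,k}=1$ forces $\abs{K_n}\le 1/\delta$, while $k\in K_n$ implies $m_k\ge\delta M_n$, which diverges by \eqref{divergent}. Bounding $\abs{S_n^H}\le \max_{k\in K_n}\abs{X_k(m_k)}$, a union bound combined with \ref{W1} yields
\[
\mathbb{P}\prt{\abs{S_n^H}>\gep/2}\le \frac{1}{\delta}\sup_{k}\sup_{m\ge\delta M_n}\mathbb{P}\prt{\abs{X_k(m)}>\gep/2}\longrightarrow 0.
\]
For the light piece, I would introduce a truncation level $A$ and set $Y_k:=X_k(m_k)\Ind{\abs{X_k(m_k)}\le A}$ and $Z_k:=X_k(m_k)-Y_k$. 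By \ref{C}, $\mathbb{E}[Y_k]=-\mathbb{E}[Z_k]$, and by \ref{W2} both $\mathbb{E}\abs{Z_k}$ and $\abs{\mathbb{E}[Y_k]}$ are uniformly bounded by $\eta(A):=\sup_{k,m}\mathbb{E}\crt{\abs{X_k(m)}\Ind{\abs{X_k(m)}>A}}$, with $\eta(A)\to 0$ as $A\to\infty$. Hence the $L^1$ norms of $\sum_{k\in L_n}a_{n,k}Z_k$ and of the deterministic shift $\sum_{k\in L_n}a_{n,k}\mathbb{E}[Y_k]$ are each at most $\eta(A)$. For the centred bounded part, Chebyshev together with $a_{n,k}^2\le\delta a_{n,k}$ on $L_n$ and $\sum_{k\in L_n}a_{n,k}\le 1$ gives
\[
\mathbb{P}\prt{\abs{\sum_{k\in L_n}a_{n,k}\prt{Y_k-\mathbb{E}[Y_k]}}>\gep/4}\le \frac{16 A^2\delta}{\gep^2}.
\]
Taking first $A$ large so that the $\eta(A)$ contributions are absorbed, then $\delta$ small, makes all three pieces arbitrarily small.

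The main obstacle is conceptual rather than computational: without $\max_k a_{n,k}\to 0$ the classical Toeplitz--Chebyshev scheme fails, and one must let the coefficient size itself dictate how to deploy the two hypotheses. The heavy/light split does precisely this, with \ref{W1} handling the at most $1/\delta$ heavy indices and \ref{W2} controlling the bulk via uniform truncation; the centering assumption \ref{C} is needed only to transfer the bias created by truncation back into the small quantity $\eta(A)$.
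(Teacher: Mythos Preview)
Your argument is correct and follows the same broad strategy as the paper---separate indices by how large the associated mass is, use \ref{W1} on the large-mass piece and a truncation/variance bound on the remainder---but your execution is more streamlined in two respects. First, the paper splits according to a \emph{fixed} threshold on the mass ($m_k>K$ versus $m_k\le K$) and handles the large-mass piece via Markov after first deriving from \ref{W1}+\ref{W2} that $\sup_k\mathbb{E}\abs{X_k(m)}\to 0$; your relative threshold $a_{n,k}>\delta$ caps the number of heavy indices by $1/\delta$, so a direct union bound with \ref{W1} suffices and the intermediate $L^1$ estimate is bypassed. Second, on the small-coefficient piece the paper truncates at the variable level $M_n/m_k$ and needs a separate technical lemma to control the resulting variance and truncation error, whereas your fixed-level truncation at $A$ together with the elementary bound $a_{n,k}^2\le\delta a_{n,k}$ on $L_n$ gives the variance estimate in one line. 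The paper's variable truncation is a device inherited from the classical literature (Jamison--Orey--Pruitt) and pays off in other settings, but for the weak law as stated your two-parameter scheme (choose $A$, then $\delta$, then $n\to\infty$) is the cleaner route.
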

In this centred case, as captured in the next theorem,
to obtain a strong LLN for $\chv{S_n, n \in \mathbb{N}}$ we impose further conditions on $\mathbb{X}$.
In particular the concentration condition will be
strengthen by requiring a mild polynomial decay and the
uniform integrability by a uniform domination.

\begin{thm}[{\bf Strong LLN}]
\label{incrementalstrong} 
Assume that $\mathbb{X}$ satisfies \eqref{C} and 
\begin{description}
\item[\namedlabel{S1}{S1}]\emph{(Polynomial decay)} There is a $\delta>0$ such that for all $\gep>0$ there is a $C = C(\gep)$ for which
\begin{equation}\notag
  \sup_k\mathbb{P}\left(\abs{X_k(m)}>\gep\right)<\frac{C}{m^\delta}.
\end{equation}
\item[\namedlabel{S2}{S2}] \emph{(Stochastic domination
and moment control)}  There is a random variable $X_*$
and $\gamma>0$ such that
$\mathbb{E}(\abs{X_*}^{2 +\gamma })<\infty$ and for all
$x \in \mathbb{R}$
\begin{equation}\notag
  \sup_{k,m}\mathbb{P}(X_{k}(m)>x)\leq \mathbb{P}(X_*>x). \end{equation}  
\end{description}
Then for any sequence
$\bf{m} \in \mathbb{R}_+^\mathbb{N}$
that satisfies \eqref{divergent}, 
\begin{equation}\label{sLLNincremental}
\mathbb{P}\prt{\lim_{n\to\infty}S_n= 0} = 1.
\end{equation} 
\end{thm}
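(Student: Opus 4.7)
The strategy is to establish the strong law by a two-stage reduction: first, a truncation using \eqref{S2} that reduces the problem to a sum of bounded random variables; second, a variance argument built on the interplay of \eqref{S1} and \eqref{S2}, coupled with a sparse-subsequence plus maximal-inequality scheme to upgrade convergence in probability (guaranteed by the weak LLN in Theorem~\ref{incrementalweak}) to almost-sure convergence.

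For the truncation step, set $A_k:=k^\alpha$ with $\alpha(2+\gamma)>1$ and define $\bar X_k:=X_k(m_k)\Ind{\abs{X_k(m_k)}\leq A_k}$. By \eqref{S2} and Markov's inequality, $\sum_k \mathbb{P}(\abs{X_k(m_k)}>A_k)\leq \mathbb{E}\abs{X_*}^{2+\gamma}\sum_k A_k^{-(2+\gamma)}<\infty$, so Borel--Cantelli produces a (random) index $K(\omega)$ beyond which $X_k(m_k)=\bar X_k$. Setting $\bar S_n:=\sum_{k\leq n}(m_k/M_n)\bar X_k$, the difference $S_n-\bar S_n$ is a Toeplitz average of finitely many bounded quantities with vanishing weights, so $S_n-\bar S_n\to 0$ almost surely. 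Uniform integrability of $\abs{X_*}^{2+\gamma}$ (from \eqref{S2}) also gives $\abs{\mathbb{E}\bar X_k}\leq \mathbb{E}[\abs{X_*}\Ind{\abs{X_*}>A_k}]\to 0$, so $\mathbb{E}\bar S_n\to 0$, and it suffices to treat $\hat S_n:=\bar S_n-\mathbb{E}\bar S_n$.

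For $\hat S_n$ the key input is a variance decay in the mass parameter. Splitting the tail integral $\int_0^\infty 2s\,\mathbb{P}(\abs{X_k(m_k)}>s)\,ds$ at two levels $\epsilon\leq A$ and applying \eqref{S1} on $(\epsilon,A]$ and \eqref{S2} on $(A,\infty)$ yields
\[
\mathrm{Var}(\bar X_k)\leq \mathbb{E}[X_k(m_k)^2]\leq \epsilon^2+C(\epsilon)\,A^2/m_k^\delta+\mathbb{E}[X_*^2\Ind{\abs{X_*}>A}],
\]
which becomes arbitrarily small once $m_k$ is large. Partitioning the summation indices at a threshold $L=L(n)\to\infty$ with $L/M_n\to 0$ and using the uniform variance bound from \eqref{S2} on $\{m_k\leq L\}$ then gives $\mathrm{Var}(\hat S_n)\leq \mathbb{E}\abs{X_*}^2\,L/M_n+\sup_{m\geq L}\sup_k\mathbb{E}[X_k(m)^2]\to 0$.

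To promote variance decay to almost-sure convergence, I would select a sparse subsequence $(n_j)$ along which $M_{n_j}$ grows geometrically; a quantitative version of the previous bound together with Chebyshev and Borel--Cantelli then yields $\hat S_{n_j}\to 0$ a.s. The oscillation on each block $(n_j,n_{j+1}]$ is handled by Kolmogorov's maximal inequality applied to the bounded, centred, independent increments $\{(m_k/M_{n_{j+1}})\hat X_k:n_j<k\leq n_{j+1}\}$. The main technical obstacle is that a pure Kolmogorov $L^2$ or three-series argument based only on \eqref{S2} breaks down whenever a single $m_k$ may dominate $M_n$ (as happens, e.g., for geometrically growing masses): this is precisely the regime where \eqref{S1} is indispensable, since along such spike indices one necessarily has $m_k\to\infty$, which forces $X_k(m_k)$ to concentrate near zero and thereby restores summability in the Borel--Cantelli step.
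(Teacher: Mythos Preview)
Your truncation and recentering are fine, and the bound $\mathrm{Var}(\hat S_n)\to 0$ is correct (it recovers the weak law). The gap lies in the promotion step, where you assert that ``a quantitative version of the previous bound'' feeds Borel--Cantelli along a geometric subsequence. Assumptions \eqref{S1}--\eqref{S2} do \emph{not} yield any rate on $\sup_k\mathbb E[X_k(m)^2]$ as $m\to\infty$, because $C(\gep)$ in \eqref{S1} may blow up arbitrarily fast as $\gep\downarrow 0$. Concretely, take $X_k(m)=b_m G_k$ with $G_k$ i.i.d.\ standard Gaussian and $b_m=(\log m\,\log\log m)^{-1/2}$; then \eqref{C}, \eqref{S1} (for every $\delta>0$) and \eqref{S2} hold, but $\mathrm{Var}(\hat X_k)=b_{m_k}^2$. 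For $m_k=2^k$ this gives $\mathrm{Var}(\hat S_n)\asymp (n\log n)^{-1}$, which is not summable, so Chebyshev plus Borel--Cantelli fails along $n_j=j$. A sparser choice such as $n_j=2^j$ makes $\sum_j\mathrm{Var}(\hat S_{n_j})<\infty$, but then Kolmogorov's inequality on the block $(n_j,n_{j+1}]$ is useless because $m_{n_{j+1}}/M_{n_j}\asymp 2^{2^j}$, and no intermediate sparsity resolves both constraints. Your closing remark that \eqref{S1} ``restores summability'' at spike indices is the right instinct, but it cannot be realised through second moments; one must apply \eqref{S1} \emph{directly} to $\mathbb P(|X_k(m_k)|>\gep)$, and the real difficulty is to decide \emph{which} indices qualify as spikes and what to do with the rest.

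This is exactly what the paper's proof organises, via an \emph{iterated} scale decomposition rather than a single threshold. One fixes $K\in\mathbb N$ with $K\delta>1$ and $K/(K-1)<1+\gamma$, then peels off layers $M^{i,s}=\{k^i_j:m_{k^i_j}<j^{i/K}\}$ for $i=1,\dots,K$. On each intermediate layer the restricted Toeplitz coefficients decay polynomially, $\tilde a_{j,J}\le CJ^{-(K-1)/K}$, so Rohatgi's strong law~\cite{Roh71} applies under the moment bound in \eqref{S2}. The indices that survive all $K$ peelings have $m_{k^K_j}$ growing fast enough in their own enumeration that $\sum_j\mathbb P(|X_{k^K_j}|>\gep)\le C(\gep)\sum_j j^{-K\delta}<\infty$ by \eqref{S1} alone. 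The bottom layer $\{m_k\le 1\}$ is treated separately by an argument adapted from~\cite{JamOrePru65}. The iteration depth $K$, chosen according to $\delta$ and $\gamma$, is precisely what your single ``below $L$ / above $L$'' split is missing: for small $\delta$, one pass cannot simultaneously make the surviving large-mass set sparse enough for \eqref{S1} to be summable and the remaining coefficients small enough for an $L^2$ argument.
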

As anticipated, motivated by the random walk
in~\cite{AdH19,ACdCdH19}, we next focus on a more
general sum by considering a time parameter $t$ that
runs on the positive real line partitioned into
intervals $I_k=[M_{k-1},M_k)$ of size
$m_k$: $[0,\infty)= \cup_k I_k$.
As  $t\to \infty$  the increments determined by the
partition are gradually completed as captured in
definition~\eqref{gradual} below. 
For $\bf{m} \in \mathbb{R}_+^{\mathbb{N}}$, 
let \begin{equation}\label{lastresampling}
  \ell_t = \ell_t({\mathbf{m}})
  := \inf\chv{\ell \in \mathbb{N}: M_\ell\geq t},
\end{equation} 
and set $\bar{t}: = t - M_{\ell_t - 1}$.
We define the  \emph{gradual sum}   by
\begin{equation}\label{gradual}
\mathcal{S}_t = \mathcal{S}_t
(\mathbf{m})
: = \sum_{k=1}^{\ell_t-1} \frac{m_k}{t} X_k(m_k) + 
\frac{\bar{t}}{t} X_{\ell_t}(\bar{t}). 
\end{equation}
The next theorem, is the extension of
Theorem~\ref{incrementalstrong}
to treat the gradual sum $\mathcal{S}_t$.
 \begin{thm}[{\bf Generalized strong LLN}]
\label{gradualstrong} 
Assume that $\mathbb{X}$ satisfies
\eqref{C}, \eqref{S1},
\eqref{S2}, and further:
\begin{description} 
\item[\namedlabel{S3}{S3}]{\emph{(Slow relative increment growth)}}  
for every $\gep>0$ there is a $\beta>1$, $C_\gep>0$
which for every $t,r>0$
\begin{equation}\notag
\sup_k\mathbb{P}\prt{\sup_{s \leq m} \abs{(r+s)X_{k}(r + s) - r X_{k}(r)} \geq t\gep}
\leq \frac{C_\gep m^\beta}{t^\beta}.
\end{equation}
\end{description}
Then for any sequence
$\mathbf{m} \in \mathbb{R}_+^\mathbb{N}$ that
satisfies~\eqref{divergent}
\begin{equation}
\label{e:SLLN}
 \mathbb{P}\prt{\lim_{t\to\infty}\mathcal{S}_t =0} = 1.
\end{equation} 
 \end{thm}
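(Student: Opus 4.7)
The plan is to bootstrap from Theorem~\ref{incrementalstrong}, which under \eqref{C}, \eqref{S1}, \eqref{S2} already delivers $S_n \to 0$ almost surely. Since \eqref{divergent} forces $\ell_t \to \infty$ as $t \to \infty$, the discrete skeleton $\mathcal{S}_{M_n} = S_n$ is already under control; the role of the new hypothesis \eqref{S3} will be to interpolate within the windows $[M_{\ell-1}, M_\ell)$.

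For $t$ in such a window, set $\ell = \ell_t$ and $\bar t = t - M_{\ell-1} \in [0, m_\ell)$. A direct computation from \eqref{gradual} yields the decomposition
\begin{equation}
\mathcal{S}_t = \frac{M_{\ell-1}}{t} S_{\ell-1} + \frac{\bar t\, X_\ell(\bar t)}{t},
\end{equation}
in which the first summand is dominated by $|S_{\ell-1}|$ and hence vanishes a.s. The theorem therefore reduces to showing that the remainder
\[
R_\ell := \sup_{\bar t \in [0, m_\ell)} \frac{\bar t\,|X_\ell(\bar t)|}{M_{\ell-1}+\bar t}
\]
tends to zero a.s.\ as $\ell \to \infty$.

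To handle $R_\ell$ I would invoke \eqref{S3} with $r$ arbitrarily small (exploiting that $0\cdot X_\ell(0)=0$), $m=m_\ell$, and tolerance $\gep M_{\ell-1}$, to obtain
\begin{equation}
\mathbb{P}\Bigl(\sup_{\bar t \leq m_\ell} |\bar t\,X_\ell(\bar t)| \geq \gep M_{\ell-1}\Bigr) \leq C_\gep \left(\frac{m_\ell}{M_{\ell-1}}\right)^\beta.
\end{equation}
Since $t \geq M_{\ell-1}$ on the window, on the complement of this event $R_\ell \leq \gep$. A Borel--Cantelli argument then concludes, provided $\sum_\ell (m_\ell/M_{\ell-1})^\beta < \infty$.

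The core technical obstacle is precisely this summability. For slowly (e.g.\ polynomially) growing masses it follows from $\beta>1$, but exponentially growing masses keep $m_\ell/M_{\ell-1}$ bounded away from zero and spoil the crude Borel--Cantelli. In such regimes I would instead exploit the auxiliary a.s.\ identity
\[
\frac{m_\ell\,X_\ell(m_\ell)}{M_\ell} = S_\ell - \frac{M_{\ell-1}}{M_\ell}\,S_{\ell-1},
\]
whose right-hand side already vanishes a.s.\ by Theorem~\ref{incrementalstrong}, and couple this with \eqref{S3} taken at positive $r$ to anchor $\bar t\,X_\ell(\bar t)$ to its right-endpoint value $m_\ell\,X_\ell(m_\ell)$. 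A dyadic decomposition of $[0,m_\ell]$ would then be used to promote these pointwise controls to the required uniformity in $\bar t$. I expect this matching of the two regimes (small $\bar t$, handled by the naive bound, and $\bar t$ comparable to $m_\ell$, handled by anchoring at the endpoint) to be the delicate part of the argument.
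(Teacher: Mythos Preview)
Your reduction to the boundary term $R_\ell$ is exactly the paper's first move, and invoking \eqref{S3} with $r\downarrow 0$ to control windows with small ratio $m_\ell/M_{\ell-1}$ is also what the paper does. The genuine gap is in the large-increment regime.

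Anchoring at the right endpoint via the identity $m_\ell X_\ell(m_\ell)/M_\ell = S_\ell - (M_{\ell-1}/M_\ell)S_{\ell-1}\to 0$ gives almost-sure convergence with \emph{no rate}, so it cannot seed a Borel--Cantelli argument for the oscillations. And a dyadic subdivision of $[0,m_\ell]$ does not rescue this: applying \eqref{S3} over a block of length $m_\ell/2^{j+1}$ with scale $t\ge M_{\ell-1}$ gives a bound of order $(m_\ell/(2^{j+1}M_{\ell-1}))^\beta$, whose sum over $j$ is comparable to $(m_\ell/M_{\ell-1})^\beta$---precisely the quantity you have declared non-summable. Matching the ``small $\bar t$'' and ``$\bar t$ near $m_\ell$'' regimes leaves an uncovered middle zone whenever $m_\ell$ is comparable to or larger than $M_{\ell-1}$.

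The paper's mechanism is different in two respects. First, the small/large split is not a global dichotomy on $\mathbf m$ but an index-by-index one against a prescribed sequence $\alpha_j=j^{-a}$ with $a\in(1/\beta,1)$: the $j$-th \emph{small} increment is one with ratio below $\alpha_j$, so \eqref{S3} yields the summable bound $\alpha_j^\beta$. Second, for the complementary \emph{large} increments the divergence $\sum_j\alpha_j=\infty$ forces $M_{k_i^*}\ge \exp(c\,i^{1-a})$; within each large window the paper lays down geometric pins $t_{i,j}=(1+\alpha_{\cdot})t_{i,j-1}$ and controls the pinned values via \eqref{S1}, namely $\mathbb{P}(|X_{k_i^*}(\bar t_{i,j})|>\gep)\le C_\gep/\bar t_{i,j}^{\delta}$, which is summable thanks to the super-polynomial growth. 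Oscillations between consecutive pins are then handled by \eqref{S3} with ratio $\alpha_j$. The key point your proposal misses is that \eqref{S1}---not the discrete endpoint identity---supplies the anchors in the large regime, and that the carefully tuned $\alpha_j$ (summable in $\alpha_j^\beta$, divergent in $\alpha_j$) is what makes both halves of the argument close.
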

 \begin{remark}{\bf{(Continuity assumption for the gradual sum)}}
\label{extragradual} Assumption \eqref{S3} controls
the oscillations between the times $M_n$.
If the sequence $sX_k(s)$ was a martingale, Doob's $L^p$
inequality would yield \eqref{S3}. Also, if the
increments $$(r+s)X_{k}(r + s) - r X_{k}(r)$$ were
bounded by $f(s)$ then this condition would also follow.
Condition \eqref{S3} reveals that the one parameter
families $\chv{X_k(m),m\in \mathbb{R}}$ we consider
here possess some dependence structure or satisfy some
increment domination.
\end{remark}

If the random variables $\mathbb{X}$ are not centred,
the convergence of $S_t$ will correspond to the
convergence of $\mathbb{E}\crt{S_t}$.
This is the content of the next result.
\begin{corollary}[\textbf{Non-centred strong LLN}]
\label{gameofmass}
Assume that $\mathbb{X}$
satisfies~\eqref{S1}--\eqref{S3}
and that the sequence
$\mathbf{m}\in \mathbb{R}_+^{\mathbb{N}}$
satisfies~\eqref{divergent}.
Then, for any increasing sequence
$\chv{t_k}_{ k \in \mathbb{N}}$ with
$\lim_k t_k = \infty$ and
$\lim_k \mathbb{E}\crt{\mathcal{S}_{t_k}} =: v$,
\begin{equation}\label{e:gm}
\mathbb{P} \prt{\lim_k \mathcal{S}_{t_k} = v } = 1.
\end{equation}
\end{corollary}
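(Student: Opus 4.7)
The plan is to reduce the non-centred setting to Theorem~\ref{gradualstrong} by centering. Let $\mu_{k,m} := \mathbb{E}[X_k(m)]$ and $Y_k(m) := X_k(m) - \mu_{k,m}$. By linearity the gradual sum decomposes as $\mathcal{S}_t = \mathcal{S}_t^Y + \mathbb{E}[\mathcal{S}_t]$, where $\mathcal{S}_t^Y$ denotes the gradual sum built from the centred family $\{Y_k\}$. Combined with the hypothesis $\mathbb{E}[\mathcal{S}_{t_k}] \to v$, the statement reduces to showing $\mathcal{S}_t^Y \to 0$ almost surely (for all $t$, not just along the subsequence $t_k$), which will follow from Theorem~\ref{gradualstrong} once I verify that $\{Y_k\}$ inherits the hypotheses \eqref{S1}--\eqref{S3}.

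A preliminary estimate I will need is that $\sup_k |\mu_{k,m}| \to 0$ at a polynomial rate in $m$. This comes from the split $|\mu_{k,m}| \leq \varepsilon + \mathbb{E}[|X_k(m)|\mathbf{1}_{|X_k(m)| > \varepsilon}]$, treating the second term via H\"older's inequality using the uniform $(2+\gamma)$-moment furnished by \eqref{S2} together with the polynomial concentration \eqref{S1}, and then optimising $\varepsilon$ as an appropriate power of $m$. With this estimate, \eqref{S1} for $\{Y_k\}$ follows by absorbing $\mu_{k,m}$ into the concentration threshold, and \eqref{S2} follows since $Y_k(m)$ is stochastically dominated by $X_* + \sup_{k,m}|\mu_{k,m}|$, which still has a finite $(2+\gamma)$-moment.

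The step I expect to be the main obstacle is verifying \eqref{S3} for $\{Y_k\}$. The deterministic correction appearing in $(r+s)Y_k(r+s) - rY_k(r)$ equals $(r+s)\mu_{k,r+s} - r\mu_{k,r} = \mathbb{E}\bigl[(r+s)X_k(r+s) - rX_k(r)\bigr]$, and by Jensen's inequality its absolute value is bounded by $\mathbb{E}[Z_{k,r,m}]$, where $Z_{k,r,m} := \sup_{s\leq m}|(r+s)X_k(r+s) - rX_k(r)|$. Integrating the tail bound supplied by \eqref{S3} for $\{X_k\}$ shows $\mathbb{E}[Z_{k,r,m}] = O(m)$ uniformly in $k$ and $r$, where the assumption $\beta > 1$ is essential for finiteness. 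Consequently, in the regime $m \leq t\varepsilon/(2c)$ the correction is at most $t\varepsilon/2$, so it can be absorbed into the threshold and the bound of \eqref{S3} for $\{Y_k\}$ reduces to that for $\{X_k\}$ at threshold $t\varepsilon/2$; in the complementary regime $m > t\varepsilon/(2c)$ the right-hand side $C_\varepsilon m^\beta / t^\beta$ is bounded away from zero and the inequality becomes trivial after enlarging $C_\varepsilon$. Once \eqref{S3} is secured, Theorem~\ref{gradualstrong} applied to $\{Y_k\}$ yields $\mathcal{S}_t^Y \to 0$ almost surely, and the decomposition combined with $\mathbb{E}[\mathcal{S}_{t_k}] \to v$ then gives \eqref{e:gm}.
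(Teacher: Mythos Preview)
Your approach---centre the family and apply Theorem~\ref{gradualstrong}---is exactly what the paper intends; the corollary is stated without an explicit proof, and your verification of \eqref{C}, \eqref{S1}--\eqref{S3} for the centred family $\{Y_k\}$ fills in precisely the details the paper omits. Your treatments of \eqref{S2} and especially \eqref{S3} (bounding the deterministic correction by $\mathbb{E}[Z_{k,r,m}]=O(m)$ via the tail integral, then splitting on $m\lessgtr t\varepsilon/(2c)$) are correct.

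One small overclaim: the assertion that $\sup_k|\mu_{k,m}|\to 0$ \emph{at a polynomial rate} is not justified by your argument, because optimising $\varepsilon$ as a power of $m$ requires control on how $C(\varepsilon)$ in \eqref{S1} grows as $\varepsilon\downarrow 0$, and \eqref{S1} gives none. This does not damage the proof: for \eqref{S1} on $\{Y_k\}$ you only need $\sup_k|\mu_{k,m}|\to 0$, which your H\"older bound (with $\varepsilon$ fixed) does yield. Then for $m\geq m_0(\varepsilon)$ one has $|\mu_{k,m}|<\varepsilon/2$, so $\{|Y_k(m)|>\varepsilon\}\subset\{|X_k(m)|>\varepsilon/2\}$ and the bound $C(\varepsilon/2)/m^\delta$ applies; for $m<m_0(\varepsilon)$ the trivial bound $1\leq m_0(\varepsilon)^\delta/m^\delta$ suffices. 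With this adjustment the argument goes through with the same $\delta$.
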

We remark that it is not sufficient to examine the
convergence of the sequence $t_k = M_k$,
as the boundary term in the gradual sum may not be
negligible as, for instance, in the case where 
\begin{equation}\label{craz}
  \mathbb{E}[X_k(m)]= \frac{2^k - m}{\max\{2^k,m\}}.
\end{equation}
Indeed, for $m_k = 2^k$ $\mathbb{E}[X_k(m_k)] = 0$ and
so for $t_k = M_k$,
$\lim_k\mathbb{E}(\mathcal{S}_{t_k}) = 0$ but for
$t_k = M_{k-1} + 2^{k-1} $,
$\lim_k\mathbb{E}(\mathcal{S}_{t_k}) = 1/4$.

Interestingly, if $\mathbb{E}[X_k(m)] = v_m$ depends
only of $m$,
one can relate the convergence of $\mathcal{S}_t$ to
the structure of $\mathbf{m}$.
This is what we call \emph{the game of mass} and
explore next.

\section{The game of mass: operative conditions}
\label{game}
As a consequence of Corollary~\ref{gameofmass},
it is natural to seek for conditions on
$(\mathbb{X},\mathbf{m})$ that guarantee convergence of
the full sequence $\mathcal{S}_t$. In this section, we
assume that the expectation of $X_k(m)$ depends only
on $m$ and not on $k$, that is:
\begin{equation} \label{samemean}
\mathbb{E}\crt{X_{k}(m)} =v_m \quad \forall \, k \in \mathbb{N},
\end{equation} 
and that 
\begin{equation}\label{vinfty}
  m \mapsto v_m \quad \text{is a bounded continuous
    function in $\Rc$,}
\end{equation}
where $\Rc=[0,\infty]: = \mathbb{R}_+ \cup \{\infty\}$
be the compact metric space with the metric
$d(x,y): = \arctan(x) - \arctan(y)$.
 
We will classify the mass-sequences $\mathbf{m}$ into
two classes:
\emph{regular} and \emph{non-regular}.
The notion of regularity will be captured by the
existence of
the weak limit of the empirical measure associated to a
given mass sequence.
We start Section~\ref{regular} with the definition of
regular masses and show that,
contrary to the non-regular ones, the LLN always holds
true.
In Section~\ref{frequency}, we consider other notions
of regularities
and examine how they relate to the convergence of the
empirical measures.
Section~\ref{Exboun} is devoted to examples of bounded
masses and their
relation to the previously defined notions.
In Section~\ref{Exunboun}, we identify the regular
regime of mass sequences
that diverge.
Finally, in Section~\ref{Exran} we investigate what can
be said when the mass-sequence $\mathbf{m}$
is  \emph{random}.
The game of mass is summarized in Fig.~\ref{fig:CRE}.

\begin{figure}[htbp]
\begin{flushleft} 
\begin{tikzpicture}[scale = .95]

\draw[draw = black, line width = .3mm, fill = gray!30](-6,-3.5) rectangle (2,4);
\draw[draw = black, line width = .3mm] (2,-3.5)                  rectangle (6,4);

\draw[draw=gray!30, line width=0mm, fill=gray!30]   (0,-2.5) rectangle (5.97,-1);
\draw[draw=black, line width=.5mm]                 (-4.5,.9) rectangle (1.9,3.9);
\draw[draw=black, line width=.5mm]                 (-2.2,2.1) rectangle (.5, 1);
 \draw[draw=red,dotted, line width=.5mm]           (-4.5,-3.3) rectangle (4.6,-.5);
\draw[draw=black, dotted ,line width=.7mm]         (-4.6,-3.4) rectangle (4.7,2.3);

\node at (-2,4.5)           {Regular $\mathbf{m}$ $\qquad  \mu_t \overset{w}{\to} \mu_*$};
\node at (4.4,4.4)          {Irregular $\mathbf{m}$};
\node at (-2.7,3.6)           {Cèsaro divergent};
\node at (-1.1,1.8)         {Divergent};
\node at (-3.4,-.9)          {$\mathsf{F}_t \overset{L_1}{\to} \mathsf{F}_*$};
\node at (-3.6,2)           {$\mathsf{F}_t \overset{w}{\to} \mathsf{F}_*$};
\node[rotate=0] at (-7.3,-2.6)       {bounded $\mathbf{m}$};
\node[rotate=00] at (-7.5,1.2)          {unbounded $\mathbf{m}$};
\draw[draw=black, line width= 1mm]                 (-6.8,-1.5) -- (6,-1.5); 
\draw[draw=black, line width= 1mm]                 (2,4.8) -- (2,-3.5);

\node at (-1.5,-2.5)     {$\bullet$  \ref{a}};
\node at (-3.5,-2.5)     {$\bullet$  \ref{F0}};
\node at (-5.3,-2.5)     {$\bullet$  \ref{Fnot}};
\node at ( 5.3,-2.5)     {$\bullet$  \ref{irr}};  
\node at (2.6,-2.5 )     {$\bullet$  \ref{irregbounded}};
\node at (-1,1.3 )       {$\bullet$  \ref{mdivergent}};
\node at (1.2,1.8)       {$\bullet$  \ref{cesar}};
\node at (-3.5,3)         {$\bullet$  \ref{cesaroifreq}};
\node at (-1,0)          {$\bullet$  \ref{triangular}};
\node at (-5.3,0)        {$\bullet$  \ref{unbnofreq}};
\node at (3.5,1)         {$\bullet$  \ref{r:emp}}; 
\node at (3,-1)          {$\bullet$  \ref{irregL1unbounded}};
\node at (4,3.2)         {$\bullet$  \ref{imifreq}};
\node at (0,-1.1)        {$\bullet$  \ref{finite}};
\node at (0,-1.9)        {$\bullet$  \ref{finite}};
\node at (0,3.2)         {$\bullet$  \ref{infinite}};

\end{tikzpicture}

\vspace{3mm}
\caption{Summary of the game of mass
$(\bf{m},\mathbb{X})$. The above rectangle offers a
visual classification of the possible different
masses. The region in gray corresponds to masses for
which the LLN is valid, that is,  
$S_t$ converges.
The vertical line divides the masses between regular
(left) and irregular (right) ones according to
definition \ref{emp_regular}.
The horizontal line separates the mass sequences
between bounded (down) and unbounded ones for which
$\limsup m_k =\infty$ (up).
Among the unbounded masses, those divergent in a Cèsaro
sense, and in particular those divergent in a classical
sense, are always regular.  
The black and red dotted boxes correspond to those
masses for which the related frequencies are
asymptotically stable, respectively, in a weak and in
a $L^1$ sense. The roman numbers in each of the
different subclasses correspond to the labels of the
different illustrative examples from
Sections~\ref{Exboun}-\ref{Exunboun}-\ref{Exran}.
} 
\label{fig:CRE}
\end{flushleft}
\end{figure}
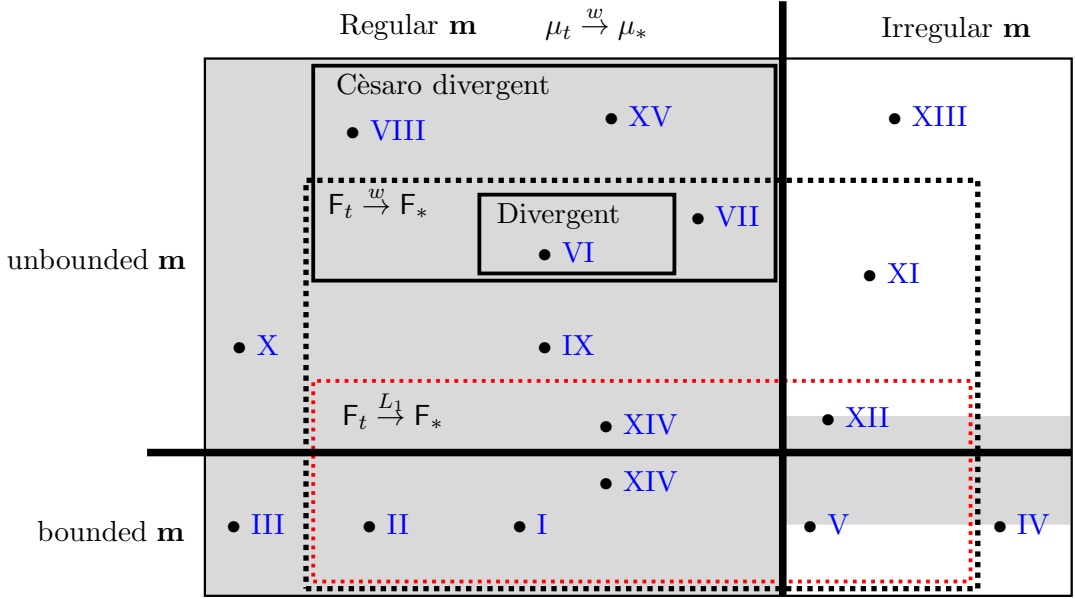     
\subsection{Regular mass sequences} \label{regular}  
Let $\Pc$ be the space of Borel measures on $\Rc$.
Recall~\eqref{lastresampling}, for a given weight
sequence  $\mathbf{m} \in \mathbb{R}_+^\mathbb{N}$,
define $\chv{\mu_t(\cdot) = \mu_{t}^{(\mathbf{m})}(\cdot), t \geq 0}$ to be the sequence of
\emph{empirical mass measures} on $\Rc$, where
$\mu_t(\cdot)$ is given by
\begin{equation}\label{empirical}
\mu_t(\cdot) 
: = \frac{\bar{t}}{t} \delta_{\bar{t}}(\cdot) 
+ \sum_{k = 1}^{\ell_t-1} \frac{m_k}{t} \delta_{m_k}(\cdot).
\end{equation}
Given a measure $\lambda \in \Pc$ and a measurable
function $f: \Rc \to \mathbb{R}$,
we denote by $\pd{\lambda}{f} = \int f(m)d\lambda(m)$
the integral of $f$ with respect to $\lambda$.
Furthermore, we say that a sequence
$\chv{\lambda_t;t\geq 0}$
of probability measures on $\Rc$ converges \emph{weakly}
to a probability measure $\lambda_*$ on $\Rc$,
denoted by $\lambda_t \xrightarrow[]{w}\lambda_*$ when
\begin{equation}\label{emp_C0}
\lim_t \pd{\lambda_t}{f} =  \pd{\lambda_*}{f}, \quad \text{for every } f \in \CL(\Rc),
\end{equation}
where $\CL(\Rc)$ denotes the space of continuous
functions on $\Rc$.
Note that this definition allows for
$\lambda_*(\{\infty\}) := 1 - \lambda_*(\mathbb{R}_+)$
to be strictly positive. 
\begin{definition}[{\bf Regular mass sequence}]
\label{emp_regular}
We say that $\mathbf{m}$ is  a regular mass sequence
when there is a probability measure $\mu_* \in \Pc$
such that
\begin{equation}\label{empregular}
\mu_t\xrightarrow[]{w} \mu_*.
\end{equation}
\end{definition}
The following proposition determines the limit of
$\mathcal{S}_t$ for regular mass sequences.
\begin{proposition}[{\bf Limit characterization for regular sequences}]
\label{cararegular}
{\, }\\
Assume $\mathbb{X}$ satisfies~\eqref{S1}--\eqref{S3},
and also that it satisfies~\eqref{samemean}
and~\eqref{vinfty}.
Then, for any mass sequence $\mathbf{m}$ and any
$t\geq 0$:
\begin{equation}\label{Smean}
  \mathbb{E}[\mathcal{S}_t]= \int v_m d\mu_t(m).
\end{equation} 
In particular, if $\mathbf{m}$ is regular
and~\eqref{empregular} holds true, then 
\begin{equation}\label{regularSt}
\mathbb{P}\prt{\lim_t \mathcal{S}_t = \int v_m d\mu_*(m)} = 1.
\end{equation}
\end{proposition}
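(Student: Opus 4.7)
The plan is to dispatch~\eqref{Smean} by a direct computation and then chain it with weak convergence and Corollary~\ref{gameofmass} to obtain~\eqref{regularSt}. For~\eqref{Smean}, I would take expectations term by term in the definition~\eqref{gradual} of $\mathcal{S}_t$ and use the assumption~\eqref{samemean}, yielding
\begin{equation}\notag
\mathbb{E}\crt{\mathcal{S}_t} = \sum_{k=1}^{\ell_t - 1} \frac{m_k}{t} v_{m_k} + \frac{\bar{t}}{t} v_{\bar{t}},
\end{equation}
which, by direct comparison with~\eqref{empirical}, is exactly $\pd{\mu_t}{v} = \int v_m \, d\mu_t(m)$.

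For the almost-sure statement~\eqref{regularSt}, set $v_\infty := \int v_m \, d\mu_*(m)$. Since $v \in \CL(\Rc)$ by~\eqref{vinfty} and $\mu_t \xrightarrow{w} \mu_*$ by regularity of $\mathbf{m}$, the definition~\eqref{emp_C0} of weak convergence gives
\begin{equation}\notag
\lim_{t \to \infty} \mathbb{E}\crt{\mathcal{S}_t} = \lim_{t \to \infty} \pd{\mu_t}{v} = \pd{\mu_*}{v} = v_\infty.
\end{equation}
In particular, for every increasing deterministic sequence $t_k \to \infty$ one has $\lim_k \mathbb{E}\crt{\mathcal{S}_{t_k}} = v_\infty$, so Corollary~\ref{gameofmass} delivers $\mathcal{S}_{t_k} \to v_\infty$ almost surely along that sequence.

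The main obstacle is to upgrade this subsequential almost-sure convergence to the continuous-parameter limit claimed in~\eqref{regularSt}, because the null set produced by the corollary depends on the chosen sequence and does not by itself forbid an excursion along a random sequence $t_n(\omega) \to \infty$. I would close this gap by applying the corollary once to a single countable sequence, say $t_k = k$, to fix a universal full-probability set on which $\mathcal{S}_k \to v_\infty$, and then using hypothesis~\eqref{S3} to dominate the oscillation of $\mathcal{S}_t$ inside each slab $[M_{\ell-1}, M_\ell)$. Inside such a slab only the boundary term $\tfrac{\bar{t}}{t}X_{\ell_t}(\bar{t})$ depends non-trivially on $t$ through a random argument, and~\eqref{S3} is tailor-made to give a polynomial-in-$m_\ell/t$ tail bound on $\sup_{0\le s\le m_{\ell_t}}\abs{(M_{\ell_t-1}+s)X_{\ell_t}(M_{\ell_t-1}+s) - M_{\ell_t-1}X_{\ell_t}(M_{\ell_t-1})}$. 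Summing over $\ell$ and invoking Borel--Cantelli rules out escape along random subsequences, so the discrete convergence $\mathcal{S}_k \to v_\infty$ lifts to the continuous limit~\eqref{regularSt}; this quantitative passage from discrete to continuous time is where the bulk of the work sits.
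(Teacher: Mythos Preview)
Your computation for~\eqref{Smean} and the weak-convergence step yielding $\mathbb{E}[\mathcal{S}_t]\to\pd{\mu_*}{v}$ are exactly what the paper does; the paper then simply writes ``\eqref{regularSt} follows from Corollary~\ref{gameofmass}'' and stops.

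The extra paragraph you devote to upgrading from sequential to continuous-parameter convergence is not present in the paper's argument, and it is also unnecessary. Corollary~\ref{gameofmass} is obtained by applying Theorem~\ref{gradualstrong} to the centred family $X_k(m)-\mathbb{E}[X_k(m)]$, and that theorem already delivers the full continuous-time statement $\mathbb{P}\bigl(\lim_t(\mathcal{S}_t-\mathbb{E}[\mathcal{S}_t])=0\bigr)=1$ on a single null set; combined with the deterministic limit $\mathbb{E}[\mathcal{S}_t]\to\pd{\mu_*}{v}$ this gives~\eqref{regularSt} outright. Your proposed interpolation via~\eqref{S3} and Borel--Cantelli is essentially re-proving the boundary-term control in Section~\ref{sss:gradual}, work that is already packaged inside Theorem~\ref{gradualstrong}. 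So the ``obstacle'' you identify is a feature of the phrasing of the corollary rather than a genuine gap, and the bulk of the effort you anticipate is redundant.
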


\begin{proof}
To prove \eqref{Smean} note first that by \eqref{vinfty}
we can define $f \in \CL(\Rc)$
by $f(\infty): = \lim_m v_\infty$ and $f(m) = v_m$.
Now note that
\begin{align*}
  \mathbb{E}[\mathcal{S}_t] &=
  \mathbb{E}\Big[\frac{\bar{t}}{t} X_{\ell_t}(\bar{t})
    + \sum_{k = 1}^{\ell_t-1}\frac{m_k}{t}X_{k}(m_k)\Big]\\
  & =  v_{\bar{t}} \frac{\bar{t}}{t} +
  \sum_{k = 1}^{\ell_t-1} v_{m_k} \frac{m_k}{t}
  = \int v_m d\mu_t(m)= \pd{\mu_t}{f}.
\end{align*}
As a consequence, if $\mathbf{m}$ is regular there is
$\mu_* \in \Pc$ for which~\eqref{empregular} holds. 
Therefore, by~\eqref{empregular}
$\pd{\mu_t}{f} \to \pd{\mu_*}{f}$ and~\eqref{regularSt}
follows from Corollary~\ref{gameofmass}.
\end{proof}
\begin{remark}
When $\mathbf{m}$ is not regular, almost sure
convergence is not prevented, in fact, if $v_m = 0$ for
all $m$,
then by Theorem \ref{gradualstrong}, $\mathcal{S}_t$
converges almost surely to $0$. On the other hand,
Examples \ref{r:emp},~\ref{imifreq} presented in
Section \ref{cesarounbounded}
below show that almost sure convergence may not hold
for irregular masses.
\end{remark}
\subsubsection{Regularity and stability of empirical frequency}\label{frequency}
There are other notions of regularity beyond the one
captured in Definition \ref{emp_regular}.
For example, instead of
the empirical  measure in \eqref{empirical},
we may examine 
the \emph{empirical mass frequency}
$\chv{\mathsf{F}_t = \mathsf{F}_{t}^{(\mathbf{m})}, t \geq 0}$,
where $\mathsf{F}_t\in \Pc$ is given by
\begin{equation}\label{e:empfreq}
\mathsf{F}_t 
:= \frac{\delta_{\bar{t}}}{\ell_t}
+\sum_{k=1}^{\ell_t-1}\frac{ \delta_{m_k}}{\ell_t}.
\end{equation}
The reason to consider other notions of regularity
is that it contributes to a finer control of the
operative conditions for convergence for $L^1$ bounded
sequences of increments,
see also exemple \ref{finite}.

We note that, for any $t\geq 0$ and any arbitrary
function $f$,
the following relation between $\mu_t$
and $\mathsf{F}_t$ is in force: 
\begin{equation}\label{FandMu}
\int f(\x) \, d\mu_t(\x)=\frac{\ell_t}{t}\int \x f(\x) \, d\mathsf{F}_t(\x) .
\end{equation}
In particular, if we take $f(m)=v_m$ and $f(m)\equiv1$,
respectively, we have that
\begin{equation}\label{e:empmv}
  \mathbb{E}[\mathcal{S}_t]
  =\frac{\ell_t}{t}\int v_m m \, d\mathsf{F}_t(m),
\end{equation}
and 
\begin{equation}\label{lt}
 \frac{t}{\ell_t}= \int m \, d \mathsf{F}_t(m).
\end{equation}
The relation in \eqref{FandMu} may suggest to consider
weak convergence of
$\mathsf{F}_t$ as a natural alternative notion of
regularity.
However, as shown in the Proposition \ref{cararegular}
below, these two notions are not equivalent.
We find more convenient to adopt the notion in
Definition \ref{emp_regular} for the following two
reasons.
First, there are masses for which both $\mu_t$
and $\mathsf{F}_t$
converge weakly to some $\mu_*$ and $\mathsf{F}_*$,
respectively,
but the limit of $\mathcal{S}_t$ is determined by $\mu_*$
and not by $\mathsf{F}_*$, see Examples \ref{F0},
\ref{triangular},
and~\ref{cesar} below.
Second, among the unbounded masses,
those divergent in a Cèsaro sense will always be regular
while the corresponding $\mathsf{F}_t$ is not
guaranteed to admit a limit,
see Examples \ref{cesaroifreq} and \ref{unbnofreq}.

Yet, it is interesting to look at the LLN from the
perspective of masses with ``well-behaved'' frequencies.
In particular, the next proposition clarifies how the
relation between $\mu_t$ and $\mathsf{F}_t$ expressed
in \eqref{FandMu} behaves in the limit. In particular
it shows how to relate the behavior of the empirical
frequencies and empirical masses under different modes
of convergence, which we next define.

As is standard, see \cite[Thm. 4.6.3, p. 245]{Dur19} we
write $\mathsf{F}_t\xrightarrow[]{L^1}\mathsf{F}_*$, if
there exists a measure $\mathsf{F}_*(\cdot)$ on
$\mathbb{R}_+$ for which
\begin{equation}\label{L1discrete}
  \mathsf{F}_t \xrightarrow[]{w}\mathsf{F}_* \quad \text{ and }\quad  \int m \, d\mathsf{F}_t(m) \to \int m \, d\mathsf{F}_*(m)< \infty.
\end{equation}
In a somewhat dual manner,
we write ${\mu}_t\xrightarrow[]{w^+}\mu_*$, if 
\begin{equation}\label{minus1}
  \mu_t \xrightarrow[]{w}\mu_* \quad \text{ and }\quad  \int \frac{1}{m} \, d\mu_t(m) \to \int \frac{1}{m} \, d\mu_*(m)<\infty.
\end{equation} 
\begin{proposition}[{\bf Regularity and stable frequencies}]
\label{stableregularity}
Assume $\mathbb{X}$
satisfies~\eqref{samemean} and~\eqref{vinfty},
consider a mass sequence $\mathbf{m}$ and assume that
for $\ell_t = \ell_t(\mathbf{m})$, the limit
$A:=\lim_{t\to\infty}\frac{\ell_t}{t}\in \Rc$ exists.
Then:
\begin{description} 
\item[\namedlabel{aa}{a}]
  $\mathsf{F}_t \xrightarrow[]{L^1}\mathsf{F}_* \neq \delta_0
  \Rightarrow \mathsf{\mu}_t \xrightarrow[]{w}\mathsf{\mu}_*$
  with
  $\pd{\mu_*}{f} := A \int m f(m)\, d \mathsf{F}_*(m)$,
\item[\namedlabel{mathbb}{b}]
$ \mathsf{\mu}_t \xrightarrow[]{w^+}\mathsf{\mu}_*\neq \delta_\infty
\Rightarrow  \mathsf{F}_t \xrightarrow[]{w}\mathsf{F}_*$
with
$\pd{\mathsf{F}_*}{f} :=  \frac{1}{A} \int \frac{1}{m}f(m)\, d \mu_*(m)$.
\end{description}
Furthermore if both cases above $A \in (0,\infty)$.
\end{proposition}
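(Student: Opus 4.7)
The plan is to exploit the pointwise identity \eqref{FandMu} between $\mu_t$ and $\mathsf{F}_t$, together with its dual obtained by substituting $f(m)\mapsto f(m)/m$ (legitimate since every $m_k>0$ and $\bar t>0$, so $\mu_t$ assigns no mass to $0$):
\begin{equation}\notag
\pd{\mathsf{F}_t}{f}=\frac{t}{\ell_t}\int \frac{f(m)}{m}\, d\mu_t(m).
\end{equation}
The strengthened modes of convergence in \eqref{L1discrete} and \eqref{minus1} are tailored exactly to passing to the limit in these integrals, whose integrands carry linear growth in $m$ or in $1/m$ respectively.

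For part (a), I would first read off $A$: combining \eqref{lt} with \eqref{L1discrete}, $t/\ell_t=\int m\, d\mathsf{F}_t\to \int m\, d\mathsf{F}_*\in(0,\infty)$, the strict positivity coming from $\mathsf{F}_*\neq\delta_0$; hence $A=1/\int m\, d\mathsf{F}_*\in(0,\infty)$. For $f\in\CL(\Rc)$, \eqref{FandMu} reads $\pd{\mu_t}{f}=(\ell_t/t)\int m f(m)\, d\mathsf{F}_t$, whose first factor tends to $A$. The second factor demands passing the limit through a continuous integrand $mf(m)$ dominated by $\|f\|_\infty\, m$, which is handled by truncation: decompose $mf(m)=(m\wedge K)f(m)+(m-K)_+ f(m)$. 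The bounded part converges by weak convergence of $\mathsf{F}_t$, while the tail is controlled by $\|f\|_\infty \bigl(\int m\, d\mathsf{F}_t -\int (m\wedge K)\, d\mathsf{F}_t\bigr)$, whose $\limsup$ in $t$ equals $\|f\|_\infty\int(m-K)_+\, d\mathsf{F}_*$ by \eqref{L1discrete} and vanishes as $K\to\infty$. Combining yields $\pd{\mu_t}{f}\to A\int m f(m)\, d\mathsf{F}_*$, which is the stated formula.

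Part (b) is the mirror image: the dual identity applied with $f\equiv 1$ and $\mathsf{F}_t(\Rc)=1$ yields $\ell_t/t=\int(1/m)\, d\mu_t\to\int(1/m)\, d\mu_*$, which lies in $(0,\infty)$ by $\mu_*\neq\delta_\infty$ and \eqref{minus1}; hence $A=\int(1/m)\, d\mu_*\in(0,\infty)$. For a general $f\in\CL(\Rc)$ the same truncation argument, now cutting against $(1/m)\wedge K$ and invoking $w^+$-convergence in place of $L^1$-convergence, produces $\int (f(m)/m)\, d\mu_t \to\int (f(m)/m)\, d\mu_*$, and the claimed formula for $\mathsf{F}_*$ follows. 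Setting $f\equiv 1$ in each defining formula also verifies that both $\mu_*$ and $\mathsf{F}_*$ are probability measures.

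The main obstacle is the Vitali-type passage to the limit in $\int m f\, d\mathsf{F}_t$ and its dual in part (b): plain weak convergence is insufficient because the integrand is unbounded, which is precisely why \eqref{L1discrete} and \eqref{minus1} are imposed. Everything else is bookkeeping through \eqref{FandMu} and \eqref{lt}.
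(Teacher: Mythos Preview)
Your proposal is correct and follows the same route as the paper's proof: both derive $A\in(0,\infty)$ from \eqref{lt} (respectively its dual) and then pass to the limit in the identity \eqref{FandMu}. The paper simply invokes $L^1$-convergence (in the sense of \cite[Thm.~4.6.3]{Dur19}) and its dual \eqref{minus1} to justify the limiting step, whereas you spell out the underlying Vitali/truncation argument explicitly; this is exactly the content behind the paper's one-line citation, so the two arguments are essentially identical.
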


\begin{proof}
For item \ref{aa}, by \eqref{lt}, \eqref{L1discrete}
and the assumption that $\mathsf{F}_t(\cdot)$ converges
in $L^1$ to $\mathsf{F}_* \neq \delta_0$, we have that
\begin{equation}\notag
\frac{t}{\ell_t} =   \int m dF_t(m) \to A = \int m d F_*(m) \in(0, \infty) .
\end{equation}
The above relation, \eqref{FandMu},
and \eqref{L1discrete} 
imply that for any $f \in \CL(\Rc)$
\begin{equation}\label{Fmuregular}
\mu_t(f) = \frac{\ell_t}{t}\int m f(m)\, d \mathsf{F}_t(m) \to A  \int m f(m)\, d \mathsf{F}_*(m).
\end{equation}
We now turn to the proof of item \ref{mathbb}.
From \eqref{FandMu}, the convergence of $\pd{\mu_t}{f}$
applied to the function $f(m)=1/m$
and the assumption that
$\mathsf{\mu}_*\neq \delta_\infty$ we have that
\begin{equation}\label{AAA}
  \frac{\ell_t}{t} = \int \frac{1}{m} \,d\mu_t(m)
  \to \int \frac{1}{m} \, d\mu_*(m) = A \in (0,\infty).
\end{equation}
Therefore, for any $f\in \CL(\Rc)$, by~\eqref{FandMu},
\eqref{minus1}, and~\eqref{AAA} we conclude that 
\begin{equation}
  \pd{ \mathsf{F}_t}{f} = \frac{t}{\ell_t}
  \int \frac{1}{m}f(m)\, d \mu_t(m)
  \to \frac{1}{A} \int \frac{1}{m}f(m)\, d \mu_*(m).
\end{equation}
\end{proof}

Proposition~\ref{stableregularity} explains part of the
different relations depicted in Fig.~\ref{fig:CRE}
among the dotted boxes corresponding to masses for
which $F_t$ convergences weakly and in $L^1$.
In what follows, with the help of examples,
we explore more how these notions of weak
and $L^1$ convergence for $F_t$
relate to the regularity of $\mu_t$.
The examples are organized in the following sections,
and in particular they clarify
Figure~\ref{fig:CRE}.
First, in Section~\ref{Exboun},
we examine bounded masses.
Second, in Section~\ref{cesarounbounded},
we examine Cèsaro divergent masses.
Third, in  Section~\ref{notcesaro},
we consider unbounded masses that
are not Cèsaro divergent.
To conclude, in Section~\ref{Exran},
we examine  i.i.d.  random masses.
\subsection{Examples of bounded masses}\label{Exboun} 
If the sequence of $\mathbf{m}$ is bounded then weak
convergence of $\mathsf{F}_t$ implies $L^1$ convergence
of $\mathsf{F}_t$.
We also remark that when the sequence is regular the
limit of $\mathcal{S}_t$ exists and is given by
$v = \int v_m \, d\mu_*(m)$.
The following examples show how
regular masses relate
with weak convergent frequencies.

\begin{example}[\textbf{Regular} + $L^1$-$\lim \mathsf{F}_t$]\label{a}\rm{}
When $\sup_k m_k <\infty$, $L^1$ convergence
follows from weak convergence
of the empirical frequency plus uniform integrability.
If $\mathsf{F}_*(m) \neq \delta_0$,
then the formula for the limit of $\mathcal{S}_t$ is
given in terms of $\mathsf{F}_*$.
Indeed, by item~\ref{aa}
of Proposition~\ref{stableregularity}
we conclude that
\begin{equation}\label{e:snc} 
\mathbb{P}\prt{\lim_{t \to \infty}\mathcal{S}_t = v }=1
\quad \text{where} \quad
v = A \int v_m m  \, d\mathsf{F}_*(m), \quad A: = \lim_t \frac{\ell_t}{t}.
\end{equation}
\end{example}
\begin{example}[\textbf{Regular} + $L^1$-$\lim \mathsf{F}_t$]\label{F0}\rm{}
This example shows that if the limit
$\mathsf{F}_* = \delta_0$ then $\mu_*$ may not be
given by the expression in item \eqref{aa} of
Proposition~\ref{stableregularity}.
  
Consider the triangular array
$\chv{{a}_{i,j}, i,j\in \mathbb{N}, j\leq i}$
defined by  $a_{i,1} := 1$ and for
$1<j\leq i$, $a_{i,j} := 2^{-i}$, represented below
\begin{equation}\label{seqF0}
  \begin{aligned}
  &1,\\
  &1,2^{-1},\\
  &1,4^{-1},4^{-1},\\
  &1,8^{-1}, 8^{-1}, 8^{-1}\\
  &\ldots\\
\end{aligned}
\end{equation}
For the sequence of increment take $m_k$ to be
the $k$-th term of this array, more precisely let
$i(k)$ be such that
\[
\frac{(i(k)-1)i(k)}{2} \leq k \leq \frac{i(k) (i(k) + 1)}{2}
\]
and $j(k) := k - \frac{(i(k)-1)i(k)}{2}$.
Let $m_k := a_{i(k),j(k)}$.
In this example,
$\mathsf{F}_t \overset{L^1}{\to}\delta_0$ while
$\mu_t \overset{w}{\to} \delta_1$. This shows that the
$L^1$ limit of $F_t$ is not sufficient to describe the
limit of $\mathcal{S}_t$, which is given by
$v_1 = \int v_m \, d \mu_*(m)$.
\end{example}

\begin{example}[\textbf{Regular} +  $\nexists\,\lim \mathsf{F}_t$]\label{Fnot}\rm{}
Let us now move to an  example of bounded regular
mass sequence such that the limit of $\mathsf{F}_t$
does not exist.
Consider the sequence $\mathbf{m}$ defined as follows:
\begin{itemize}
\item[(i)]  Set  $m_1 = 1$,
\item[(ii)] while  $\mathsf{F}_{M(k)}(\{1\})> 1/4$ set
  $m_k = 2^{-k}$  else, go to (iii),
\item[(iii)] while   $\mathsf{F}_{M(k)}(\{1\})< 3/4$
  set $m_k = 1$ else, go to (ii).
\end{itemize}
In this case, $\mu_t \overset{w}{\to} \delta_1$ and
$\mathsf{F}_t$ does not converge.
\end{example}
Note that if $\mathbf{m}$ is not regular, then
depending on $\chv{v_m, m\in \mathbb{R}_+}$,
$\mathbb{E}[\mathcal{S}_t]$  may or may not converge.
If there are $K,L \in \mathbb{R}_+$ such that
$v_K < v_L$, as in the example below, it is simple to
construct a sequence $\mathbf{m}$ for which
$\mathbb{E}[\mathcal{S}_t]$ does not converge.
\begin{example}[\textbf{Irregular} + $\nexists\, \lim \mathsf{F}_t$]
\label{irr}\rm{}
Let $\mathbf{m}$  be the sequence composed of $A_i$
increments of size $K$ followed by $B_i$ increments of
size $L$ where the sequences $(A_i)_i, (B_i)_i$ will be
determined later. More formally, let $(A_i, B_i)_i$ be
given, define
$\tau_0 := 0$ $\tau_n := \tau_{n-1} + A_n + B_n$ and set 
\begin{equation}\label{e:seq3}
m_k =
\begin{cases}
  K &\text{ if $ k \in (\tau_n, \tau_n + A_{n+1}]$ \;\,\;\;for some $n\geq 0$,}\\
  L &\text{ if $ k \in (\tau_n+A_{n+1}, \tau_{n+1}]$ for some $n \geq 0$}.
\end{cases}
\end{equation}
Choose $(A_i,B_i\,;\,i \in \mathbb{N})$ such that for
all $n \in \mathbb{N}$, $A_n < A_{n+1}$, $B_n<B_{n+1}$ and
\begin{equation}\label{irregkl}%
  \frac{L(B_1 + \ldots + B_{n})}{K(A_1 + \ldots + A_{n+1})}
  \leq \tfrac1n
  \quad\text{ and }\quad
  \frac{K(A_1 + \ldots + A_{n+1})}{L(B_1 + \ldots + B_{n+1})}
  \leq \tfrac1n.
\end{equation}
If $v_K < v_L$ then $\mathbb{E}[\mathcal{S}_t]$ does
not converge as
\begin{equation}\label{}%
\limsup_t \mathbb{E}[\mathcal{S}_t]= v_L \neq v_K=\liminf_t \mathbb{E}[\mathcal{S}_t].
\end{equation}
\end{example}
\begin{example} [\textbf{Irregular +} $L^1$-$\lim\mathsf{F}_t$] 
\label{irregbounded} \rm{}
If we combine the sequence defined in Example \ref{F0}
with the one defined in Example  \ref{irr} we can
construct an irregular sequence for which
$F_t \overset{w}{\to}F_*$. More precisely, let $m'_{k}$
be the sequence defined in Example \ref{irr} and
consider a triangular array $a_{i,j}$ defined by
$a_{i,1} := m'_i$ and  for $1<j\leq i$, set
$a_{i,j} := 2^{-i}$. To conclude, set
$m_k := a_{i(k),j(k)}$ with $i(k), j(k)$ as defined in
Example \ref{F0}. Note that this sequence is irregular
even though $\mathsf{F}_t\overset{L^1}{\to}\delta_0 $.
\end{example} 

As we look back to item \eqref{mathbb} of
Proposition~\ref{stableregularity} we see that $w^+$
convergence can not occur in any of the examples of
bounded regular mass for which  the empirical frequency
does not converge. Indeed, all those example have a
significant amount of increments of negligible mass,
and as such, they modify
the empirical frequency without affecting the limit of
the mass sequence.
We now move to the study of unbounded masses. 
\subsection{Unbounded masses} \label{Exunboun}
\subsubsection{Divergent and Cèsaro's divergent masses} \label{cesarounbounded}
We say that a  sequence of masses $\mathbf{m}$ is
Cèsaro's divergent when 
\begin{equation}\label{cesarodef}%
\lim_n \frac{m_1 + \ldots + m_n}{n} \to \infty
\end{equation}
In this case one has that
$\mu_t \overset{w}{\to} \delta_\infty$.
Therefore the  Cèsaro divergent sequences  are always
regular
and by Proposition~\ref{cararegular} 
it follows that
\begin{equation}\label{cesarospeed}
\mathbb{P}( \lim_t \mathcal{S}_t = v_\infty) =1.
\end{equation} 

A particular case of Cèsaro divergence is given by the
divergent masses as captured in the next example.
\begin{example}[\textbf{Divergent mass} + $w$-$\lim \mathsf{F}_t$]\label{mdivergent}\rm{}
We say that a sequence of masses $\mathbf{m}$ is
divergent when
\begin{equation}\label{diverging}%
\lim_{k \to \infty} m_k = \infty,
\end{equation}
in which case \eqref{cesarodef} holds true and hence
\eqref{cesarospeed}. Also, as
$\lim_t \mathsf{F}_t([0,A]) = 0$ for any $A>0$,
it follows that
$\mathsf{F}_t \overset{w}{\to} \delta_\infty$.
\end{example}     
The regime captured in~\eqref{diverging}
is treated in~\cite[Theorem 1.10]{ACdCdH19}
in the context of random walks in dynamic environment. 
Theorem~\ref{gradualstrong} can actually
be seen as a generalization of the latter. 
As briefly mentioned at the end
of Section~\ref{hypothesis},
the present proofs could actually
cover even more general cases, if,
for example, we relax the assumption
in Equation~\eqref{samemean}.
The following example shows that in
the Cèsaro divergent regime,
the sequence $\mathsf{F}_t$ may converge,
but may not be able to capture the limit of $\mathcal{S}_t$.
\begin{example}
[\textbf{Cèsaro divergent mass} + $w$-$\lim \mathsf{F}_t$]
\rm{}\label{cesar}
Consider the sequence $\mathbf{m}$
\begin{equation}\label{e:seq1}
m_k :=
\begin{cases}
  1 &\text{ if $ k $ is odd, and }\\
  k &\text{ if $ k $ is even}.
\end{cases}
\end{equation}
Informaly,  half the increments are $1$, and the other
half diverges. More precisely,
\begin{equation}\label{half1infty}
\mathsf{F}_t \overset{w}{\to}\frac{1}{2}\delta_1 +\frac{1}{2}\delta_\infty .
\end{equation}
As such, one might be tempted to say that 
$\mathbb{E}\crt{\mathcal{S}_t} \to \tfrac12 v_1 + \tfrac12 v_\infty$ as $t \to \infty$.
This is not the case because one has to take into
account the relative weights of the sequences.
As it turns out,  the mass of increments of size $1$
for this particular sequence vanishes in the limit.
Indeed, note that the sum of the first $2k$
increments, $M_{2k}$ is
\[ 
M_{2k}= \frac{k(k-1)}{2} + k = \frac{k^2 + k}{2}.
\]
Now note that $\frac{k}{M_{2k}} \to 0$ and therefore 
\begin{equation}\label{}%
\mathbb{E}[\mathcal{S}_{M_{2k}}] =\frac{k}{M_{2k}}v_1 + \frac{1}{M_{2k}} \sum_{i= 1}^k i v_i \to v_\infty.
\end{equation}
Also in this example, if $v_1\neq v_\infty$, then the
weak limit of $\mathsf{F}_t$ does not determine the
limit of $\mathcal{S}_t$,
even if it is well defined.
\end{example}
As in the bounded case, see Example \ref{Fnot}, also
Cèsaro divergent sequences may not have well behaved
empirical frequencies,
as shown in the next example.
\begin{example}[\textbf{Cèsaro divergent mass} + $\nexists\, w$-$\lim \mathsf{F}_t$]\label{cesaroifreq}\rm{} 
It is possible to construct a sequence $\mathbf{m}$
that is regular but such that $\mathsf{F}_t$ does not
converge weakly. Take an irregular sequence
$\mathbf{m}'$ such as the one defined in \eqref{e:seq3}
and intercalate it with a huge increment so that it
diverges in the Cèsaro sense. To be more concrete, for
$k \in \mathbb{N}$ let $m_{2k -1} : m'_k$ and
$m_{2k}: =k\sum_{i = 1 }^{2k-1} m_{i}$.
\end{example}

\subsubsection{Unbounded sequences that do not diverge in the Cèsaro sense}
\label{notcesaro}
When dealing with unbounded masses that
are not Cèsaro divergent,
then the sequence is not necessarily
regular and more subtle scenarios may occur,
as the following examples illustrate.
We start with an example of a regular sequence that
allows an asymptotic positive mass of increments of finite size and positive mass at infinity.

\begin{example}[\textbf{Regular} $\liminf m_k<\infty$ + $w$-$\lim \mathsf{F}_t$]\label{triangular}\rm{}
Let  the first elements of
$\mathbf{m}\in\mathbb{R}_+^{\mathbb{N}}$ be defined by
$m_1:=1, m_2:=2$ $m_ 3:= 1$.
If $m_k := j > 1$  then the next $j-1$ increments will
be of size $1$ after that $m_{k+j} := j+1$.
The sequence of increment sizes can be arranged in a
triangular array $\chv{a_{i,j}}_{i,j \geq 1}$, where
$m_k := a_{i(k),j(k)}$ with $i(k), j(k)$ as in
Example~\ref{F0}.
\begin{equation}\label{e:seq2}
  \begin{aligned}
  &1,\\
  &2,1,\\
  &3,1,1,\\
  &\ldots
  \end{aligned}
\end{equation}
In this case
$\mathsf{F}_t \overset{w}{\to} \delta_1$ but
$\mu_t \overset{w}{\to} \tfrac12 \delta_1 + \tfrac12 \delta_\infty $
and so
$\lim_t \mathbb{E}[\mathcal{S}_t] = \tfrac12 v_1 + \tfrac12 v_\infty$.
We notice in particular that if $v_1>v_\infty$, the
above mass sequence is another example
of a regular sequence  for which
the weak limit of $\mathsf{F}_t$ does not determine the
limit of $\mathcal{S}_t$, 
even when it exists.
\end{example}
The next example shows a regular sequence with
unbounded increments and for which the empirical frequency does not converge.
\begin{example}[\textbf{Regular} + $\nexists$ $w$-$\lim \mathsf{F}_t$]\label{unbnofreq}\rm{}
Take $\mathbf{m}$ as in Example \ref{Fnot} but
replace the $k$-th increment of mass $1$  by the
$k$-th increment  of the sequence defined in
Example~\ref{triangular}. For this example, we have that
\[
\limsup_t\mathsf{F}_t(\{1\}) - \liminf_t \mathsf{F}_t(\{1\}) \geq 1/2.
\]
The above condition precludes the weak convergence of
$\mathsf{F}_t$.
Furthermore, since the total mass on increments
strictly smaller than $1$ is finite,
$\mu_t \overset{w}{\to} \frac{1}{2}\delta_1 +\frac{1}{2}\delta_\infty$
and the sequence is regular.
\end{example}

\begin{example}
[\textbf{Irregular} + $w$-$\lim\mathsf{F}_t$] 
\label{r:emp} \rm{}
Only weak convergence of the empirical measure
$\mathsf{F}_t$  does not imply convergence
of $\mathcal{S}_t$. 
Indeed, assume that $v_1 >v_\infty$, and let $(K_i)_i$
and $(N_i)_i$ be auxiliary sequences that we will
determine later.
The sequence $\mathbf{m}$ alternates one increment of
size $K_i$ with $N_i$ increments of size $1$.
More precisely, let $\tau(j) = j + \sum_{i = 1}^j N_i$
and set
\[
m_k = \sum_{j=1}^\infty K_j \Ind{\tau(j)} + \Ind{(\tau(j), \tau(j+1))}.
\]
Now, choose $(N_i,K_i)$ such that
\begin{equation}\label{}%
\frac{N_1 +\ldots +N_i}{K_i} \leq \tfrac1i \text{ and } \frac{K_1 + \ldots + K_i}{ N_{i+1}} \leq \tfrac1i.
\end{equation}
Note that  $\mathsf{F}_t \overset{w}{\to} \delta_1$,
but if $v_1 >v_\infty$
\begin{equation}\label{}%
  \limsup_t \mathbb{E}[\mathcal{S}_t] = v_1 > v_\infty
  = \liminf_t \mathbb{E}[\mathcal{S}_t].
\end{equation}
\end{example}
\begin{example}[\textbf{Irregular }+  $L^1$-$\lim \mathsf{F}_t$]\label{irregL1unbounded}\rm{}
In this example we construct an unbounded irregular
sequence for which $\mathsf{F}_t$ converges in $L^1$.
In particular from item \eqref{aa} of
Proposition~\ref{stableregularity} it follows that
this limit must be $\delta_0$.
Let $(A_i)_i$ be auxiliary sequences to be defined
later. Informally,  this constructed as a combination
of Example~\ref{irregbounded} and Example~\ref{r:emp},
where we intercalate an irregular unbobunded sequence
with a large number of increments os small mass. Let
$\mathbf{m}'$  be the sequence defined in
Example~\ref{r:emp}  set  $\tau(1) := 1 $ and  for $j >1$ set $\tau(j) := \tau(j-1) + A_j$. Now let
\begin{equation}\label{interL1irr}
m_k : = \sum_j m'_j \Ind{\tau(j)} + 2^{-k} \Ind{(\tau(j-1), \tau(j))}.
\end{equation}
Finally choose $A_i$ such that 
\[
\frac{\sum_{k= 1}^{i}m'_k +1}{A_i} < \frac{1}{i}.  
\]
Since
\[
\int m dF_t(m)  \leq \frac{\sum_{k = 1}^{\ell_t} m'_k +1}{\ell_t} \to 0, \quad \text{as } t \to \infty,
\]
it follows that
$\mathsf{F}_t\overset{L^1}{\to}\delta_0$.
Furthermore, if $v_1 > v_\infty$
\[
\limsup_t \mathbb{E}[\mathcal{S}_t]= v_1 > v_\infty=\liminf_t \mathbb{E}[\mathcal{S}_t].
\]
\end{example}
 
\begin{example}[\textbf{irregular} + $\nexists\,$ $w$-$\lim \mathsf{F}_t$ ]\label{imifreq}\rm{}
It is also possible to construct a sequence $\mathbf{m}$
that is irregular but such that $\mathsf{F}_t$ does
not converge weakly.
Take the sequence defined in Example \ref{Fnot} and
replace the $k$-th increment of size $1$ by the $k$-th
increment of the sequence  defined in \ref{r:emp}.
\end{example}

\subsection{Random masses} \label{Exran}
In this section 
consider random  mass sequences $\mathbf{m}$.
More specifically, we let $m_k$ be an i.i.d. sequence
of random variables, independent of $\mathbb{X}$, each
distributed according to a measure $\nu$ on
$\mathbb{R}_+$. There are two cases  depending on
weather $\nu$ has finite or infinite mean.          
For notational ease, we model
$\prt{m_k, k \in \mathbb{N}}$ as i.i.d. random
variables in the probability space
$\prt{\Omega, \mathcal{F}, \mathbb{P}}$. 
\begin{example}[\textbf{Regular} + \textbf{(un) bounded }+ $L^1$-$\lim\mathsf{F}_t$]\label{finite}\rm{}
Assume that $\nu \neq 0$, and that
$\int m\nu(dm)<\infty$.
Now, let the increments $m_k$ to be sampled
independently from $\nu$. 
By the Glivenko-Cantelli Theorem
\cite[Theorem 2.4.9]{Dur19} it follows that almost
surely $\mathsf{F}_t([0,x])$ converges
(uniformly in $x$) to $\nu([0,x])$.
By the classical LLN for iid. random variables, almost
surely, $\int m d\mathsf{F}_t(m) \to \int m d\mathsf{F}_*(m)$.
Therefore the conditions of~\eqref{L1discrete} are
satisfied almost surely and so
$\mathbb{P}(\mathsf{F}_t \overset{L^1}{\to} \mathsf{F}_*) = 1$.
By item~\eqref{aa} of
Proposition~\ref{stableregularity} it follows that
$\mathbb{P}\big(\mu_t \xrightarrow[]{w} \nu\big) = 1$.
Therefore, almost surely, the sequence $m_k$  is
regular and
\begin{equation}
\mathbb{P}\Big(\lim_t \mathbb{E}[\mathcal{S}_t] = \int v_x d\nu(x) \Big)=1.
\end{equation}
\end{example}

\begin{example}[\textbf{Regular} + \textbf{Cèsaro }+ $\exists \,w$-$\lim F_t$ ]\label{infinite}\rm{}
Now, assume that $\int m d\nu(m) = \infty$ and define
the increments $m_k$ to be sampled independently
from $\nu$. 
In this case
\begin{equation}\label{diverge}%
  \mathbb{P}\prt{ \frac{m_1 +  \ldots + m_k }{k}
    \to \infty}=1.
\end{equation}
Then note that after $k$ increments, the mass of
increments of size smaller than $a>0$, $\mu_t([0,a])$,
is bounded by $\frac{ka}{m_1 + \ldots + m_k}$ and
therefore, by \eqref{diverge}, for any $a>0$, almost
surely
$\mu_t([0,a]) \to 0.$
This implies that
$\mathbb{P}(\mu_t \overset{w}{\to} \delta_{\infty})=1$
and therefore
\begin{equation}\label{}%
\mathbb{P}\prt{\lim_t \mathcal{S}_t = v_\infty} = 1.
\end{equation}
\end{example}

\section[]{Hypothesis and counterexamples}\label{hypothesis}
\subsection{Weak LLN: necessity of (W1) and (W2).} 
Booth conditions (W1) (W2) are necessary for the weak
LLN.
The necessity for condition (W1) is due
to~\cite[Theorem 1]{JamOrePru65}.
We show below that condition (W2) is necessary by means
of a counter-example.

\paragraph{Counter-example:}
Consider a sequence $\chv{U_k, k \in \mathbb{N}}$ of
i.i.d. uniform random variables on $(0,1)$ and
$X_k(m) := V_m(U_k)$, where
\begin{equation}
V_m(u) = 
\begin{cases}
A_m & \text{ if } u \in [0,g(m)/2),\\
-A_m& \text{ if } u \in (g(m)/2, g(m)],\\
0   & \text {else.}
\end{cases}
\end{equation}
with this definition, it follows that
$\mathbb{P}\prt{\abs{X_k(m_k)}>0} = g(m_k)$.
Assume that $g$ is a strictly decreasing  continuous
function such that $\lim_{m \to \infty} g(m)=0$.
Let $m_k: = \inf\chv{m: g(m)>1/k}$. 
This implies that $m_k \to \infty$ as $k \to \infty$
and so \eqref{divergent} is satisfied.
Furthermore by the definition of $X_k(m_k)$, the
assumptions~\eqref{C} and~\eqref{W1} in
Theorem~\ref{incrementalweak} are verified.
Now choose $\chv{A_{m_k}, k \in \mathbb{N}}$ to be such
that
\begin{equation}
  \frac{m_n}{M_{N(n)}} A_{m_n} > 1 +
  \sum_{k = 1}^{n-1}  A_{m_k},
\end{equation}
where  $N(n)$ is such that
\begin{equation}\label{futureprotected}
\mathbb{P}(\exists\, {n\leq j \leq N(n)}: X_j(m_j) \neq 0)>\frac{1}{2}.
\end{equation}
Such an $N(n)$ exists and is finite since by the second
Borel-Cantelli Lemma and the continuity of probability
measures:
\begin{equation}
1 = \mathbb{P}(\exists \, j\geq n : X_j(m_j) \neq 0) = \lim_{N \to \infty}\mathbb{P}(\exists \,n\leq j <N : X_j(m_j) \neq 0).
\end{equation}
With this choice of $A_{m_n}$ it follows that if there
is a $j$, $i \leq j \leq N(i)$ for which
$\abs{X_j(m_j)}>0$ then $\abs{S_{N(i)}}>1$.
Therefore  for any $i \in \mathbb{N}$,
\begin{equation}\label{counter}
\mathbb{P}\prt{\abs{S_{N(i)}}> 1} > \frac{1}{2}.
\end{equation}
As $\mathbb{P}(S_{n}>0 \mid \abs{S_n}>0) = \frac{1}{2}$, we conclude that
the weak LLN does not hold.

\subsection{Strong LLN: near optimality of (S1).}
One could try to improve the condition in \eqref{S1}
by requiring a decay smaller than polynomial, that is: 
\begin{equation}\label{fconcentration}
\mathbb{P}\left(\abs{X_k(m)}>\gep\right)<\frac{C_\gep}{f(m)},
\end{equation}
for some  $f: \mathbb{R}_+ \to \mathbb{R}_+$.
When we look for a scale that grows slower than any
polynomial, $f(m) = \log(m)$ is a natural candidate.
However, as illustrated next, this already allows for
counterexamples. 
\paragraph{Counter-example:} 
Let $\chv{U_k, k \in \mathbb{N}}$ be a sequence of
i.i.d. uniform random variables on $(0,1)$ and let
$X_{k}(m) := g_m(U_k)$ where
\begin{equation}\label{e:nce}
g_m(x) :=
\begin{cases}
 \phantom{-}1  & \text{if }   x \in (0,\frac{1}{2\log_2 m}),\\
 -1 & \text{if }   x \in [\frac{1}{2\log_2 m},\frac{1}{\log_2 m}),\\
  \phantom{-}0 & \text{else}.\\
\end{cases}
\end{equation}
Note that $\mathbb{X}$ fulfills
assumptions~\eqref{C}, \eqref{S2}, \eqref{S3},
and instead of~\eqref{S1} it satisfies 
\begin{equation}\label{e:ncr}
\mathbb{P}\prt{\abs{X_{k}(m)}>\epsilon} = \frac{1}{\log_2m}.
\end{equation}
Now take $\bf{m}$ with $m_k = 4^k$. For such an
$\bf{m}$ we see that the incremental sum $S_n$ does not
satisfy the strong LLN. Indeed, as
\begin{equation}\label{e:bc2}
\mathbb{P}\prt{\abs{X_{k}(m_k)}=1} = \frac{1}{2k},
\end{equation} 
by the second Borel Cantelli lemma,
\begin{equation}\label{e:bc3}
\mathbb{P}\prt{\abs{X_k(m_k)}=1, \text{ i.o} } = 1.
\end{equation}
Therefore, by \eqref{Sn} it follows that there is an  $\gep > 0 $ for which
\[
\mathbb{P}(\abs{S_n - S_{n-1}} >\gep \text{ i.o.}) =1,
\]
which  means that almost surely $S_n$ does not converge.
\medskip

In light of the above example, we see that the
condition~\ref{S1} is near to optimal.
Indeed, to improve it, we would need to find $f(m)$
satisfying
\[
\log^k(m)<<f(m) << m^\delta\quad \forall \, k \in \mathbb{N}, \; \delta >0 .
\]

\subsection{Concluding remarks}
\begin{itemize}
\item
{\bf Independence.} Our examples above and proofs below
are based on the independence in $k$ of
$\chv{X_k(m), m \in \mathbb{R}_+, k \in \mathbb{N}}$. 
However, for certain choices of well-behaved mass
sequences $\bf{m}$,
it seems possible to adapt our arguments and still
obtain a strong LLN in presence of ``weak enough dependence'',
though the notion of ``weak enough dependence'' would
very much depend on the weight sequence and this is why
we did not pursue this line of investigation.
\item{\bf Relaxing condition~\eqref{samemean}.}
In the game of mass described in Section~\ref{game},
for simplicity, we have restricted our
analysis to variables with expected value independent
of $k$, as captured in assumption~\eqref{samemean}.
We note that this not really needed,
as we might, for example,
consider $X_k(m)$'s with expected value, say, $v_m$
and $v_m'\neq v_m$ depending on the parity of $k$.
Yet, the resulting analysis would branch into many
different regimes depending on how exactly
condition~\eqref{samemean} is violated. 
\item{\bf Fluctuations and large deviations.}
It is natural to consider
``higher order asymptotics'',
such as large deviations or
scaling limit characterizations,
for the sums  in \eqref{Sn} or \eqref{gradual}.
However, the analysis for this type of questions
relies heavily on the specific
distribution of the sequence of variables
$\mathbb{X}$ thus preventing
a general self-contained treatment.   
Still, it is interesting to note that
these other questions
can give rise to many subtleties and anomalous behavior.
This is well illustrated
by the specific RWCRE model in random media
introduced in~\cite{AdH19} that motivated the present
paper,
we refer the interested reader to  \cite{ACdCdH20}
for results on crossovers phenomena in related
fluctuations,
and to~\cite{ACdCdH19} for stability results of large
deviations rate functions.
\end{itemize}     
\section{Proofs}\label{proofs}
 
\subsection{Weak law of large numbers}

In this section we prove Theorem~\ref{incrementalweak}
by implementing a truncation
argument following the ideas from~\cite{JamOrePru65}.  
For each $K>0$, let $S^K_n$ represent the contribution
to $S_n$ coming from the increments larger than $K$, i.e.
\begin{equation}
S^K_n := \sum_{k = 1}^n \frac{m_k}{M_n} X_{k}(m_k) \Ind{m_k>K}.
\end{equation}
Now note that due to~\eqref{W1} and~\eqref{W2} it
follows that
\begin{equation}\label{Expcontrol}
\lim_{K \to \infty}\sup_{m>K}\sup_k\mathbb{E}\crt{\abs{X_k(m)}} =0.
\end{equation}
Indeed, for any $\gep>0$ and any $A>\gep$
\begin{equation}
\begin{aligned}
\mathbb{E}\crt{\abs{X_k(m)}}&\leq
\gep + A\, \mathbb{P}(\gep < \abs{X_k(m)} \leq A) +  \mathbb{E}\crt{\abs{X_k(m)} \Ind{\abs{X_k(m)}>A}}.
\end{aligned}
\end{equation}
the right hand side above can be bounded by $3 \gep$
using~\eqref{W1} and~\eqref{W2}
and since $\gep>0$ is arbitrary, \eqref{Expcontrol}
follows.
Now let $\bar{S}^K_n : = S_n - S^K_n$ be the
contribution to $S_n$ coming from the increments
smaller than $K$.
By the triangle inequality and the union bound it
follows that
\begin{equation}
  \mathbb{P} \prt{\abs{S_n}> \gep} \leq \mathbb{P}
  \prt{\abs{S^K_n} + \abs{\bar{S}^K_n}> \gep}
  \leq \mathbb{P} \prt{\abs{S^K_n} > \frac{\gep}{2}}
  + \mathbb{P} \prt{\abs{\bar{S}^K_n}>\frac{ \gep}{2}}.
\end{equation}
As $\sum_{k = 1}^n \frac{m_k}{M_n} \Ind{m_k>K} \leq 1$,
\eqref{Expcontrol} and Markov's inequality imply
\begin{equation}\label{remains0}
  \limsup_{K \to \infty}  \mathbb{P} \prt{S^K_n> \gep} =0,
\end{equation}
and therefore for any $K>0$,
\begin{equation}\label{remains}
\limsup_{n \to \infty}  \mathbb{P} \prt{\abs{S_n}> \gep} \leq \limsup_{n\to \infty}\mathbb{P} \prt{\abs{\bar{S}^K_n}>\frac{ \gep}{2}}.
\end{equation}
It remains to prove that the right-hand side above goes
to zero for arbitrary $\gep>0$.
To do so, consider the truncated random variables 
\begin{equation}\label{trunc}
Y_k(m_k):=X_k(m_k)\Ind{\abs{X_k(m_k)} < \frac{M_n}{m_k}}\Ind{m_k<K} 
\end{equation}
and notice that as $M_n \to \infty$,
\begin{equation}\label{maxto0}
\lim_n \max_{1 \leq k \leq n} \frac{m_k}{M_n} \Ind{m_k<K} = 0.
\end{equation}
Set $\bar{s}^K_n :=\sum_{k =1}^n Y_n(m_k)$. We will first
argue that this truncated sum $\bar{s}^K_n$
approximates well $\bar{S}^K_n$, and then show that the
variance of truncation vanishes. To perform these two
steps we will need the following lemma, whose proof is
postponed to the end of this section.

\begin{lemma}
\label{truncY}
If~\eqref{C}, \eqref{W1},
and~\eqref{W2}
of Theorem~\ref{incrementalweak} hold true, then
\begin{equation}\label{Jam651}
\lim_{n  \to \infty}  \max_{1 \leq k \leq n}\frac{M_n}{m_k} \mathbb{P}\prt{\abs{X_k(m)}\Ind{m_k<K}\geq \frac{M_n}{m_k}}=0,
\end{equation}
and
\begin{equation}\label{Jam652}
\lim_{n \to \infty} \max_{1 \leq k \leq n}\frac{m_k}{M_n}\mathbb{E}\crt{Y^2_k(m_k)} = 0.
\end{equation}
\end{lemma}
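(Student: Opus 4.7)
The plan is to use the indicator $\Ind{m_k < K}$ to reduce both claims to indices $k$ with $m_k < K$; the terms with $m_k \geq K$ vanish trivially, as the random variable inside the indicator is then zero while $M_n/m_k > 0$. For such $k$ one has $M_n/m_k \geq M_n/K$, and this lower bound diverges as $n \to \infty$. Consequently, both limits boil down to uniform tail controls on $X_k(m)$ at divergent thresholds, which is precisely what the uniform integrability hypothesis \eqref{W2} delivers. In fact, neither \eqref{C} nor \eqref{W1} seems to play any direct role in the lemma.

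For \eqref{Jam651}, I would apply the elementary inequality $A\,\mathbb{P}\prt{\abs{X} \geq A} \leq \mathbb{E}\crt{\abs{X}\Ind{\abs{X} \geq A}}$ with $A = M_n/m_k$. Enlarging the tail event via $M_n/m_k \geq M_n/K$ on $\{m_k < K\}$ yields
\[
\max_{1 \leq k \leq n} \frac{M_n}{m_k}\, \mathbb{P}\prt{\abs{X_k(m_k)}\Ind{m_k<K} \geq \tfrac{M_n}{m_k}} \;\leq\; \sup_{k,m} \mathbb{E}\crt{\abs{X_k(m)} \Ind{\abs{X_k(m)} \geq M_n/K}},
\]
and the right-hand side tends to $0$ by \eqref{W2} since $M_n/K \to \infty$.

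For \eqref{Jam652}, I would split the truncated second moment at an auxiliary threshold $A > 0$. For $n$ large enough that $M_n/K > A$ and for $k$ with $m_k < K$, the bounds $\abs{X}^2 \Ind{\abs{X}<A} \leq A\abs{X}$ and $\abs{X}^2 \Ind{A \leq \abs{X} < M_n/m_k} \leq (M_n/m_k)\abs{X}\Ind{\abs{X}\geq A}$ give
\[
\mathbb{E}\crt{Y_k(m_k)^2} \;\leq\; A\,C + \frac{M_n}{m_k}\, \sup_{k,m} \mathbb{E}\crt{\abs{X_k(m)} \Ind{\abs{X_k(m)} \geq A}},
\]
where $C := \sup_{k,m}\mathbb{E}\crt{\abs{X_k(m)}} < \infty$ is finite because \eqref{W2} makes $\sup_{k,m}\mathbb{E}[\abs{X_k(m)}\Ind{\abs{X_k(m)}>A_0}]$ bounded at any fixed large $A_0$, hence $\mathbb{E}[\abs{X_k(m)}] \leq A_0 + $ that supremum. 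Multiplying by $m_k/M_n$ and using $m_k < K$ then gives
\[
\max_{1 \leq k \leq n} \frac{m_k}{M_n}\,\mathbb{E}\crt{Y_k(m_k)^2} \;\leq\; \frac{KAC}{M_n} + \sup_{k,m} \mathbb{E}\crt{\abs{X_k(m)}\Ind{\abs{X_k(m)} \geq A}}.
\]
Given $\varepsilon > 0$, I would first pick $A$ large enough so that the second term is below $\varepsilon/2$ by \eqref{W2}, and only then choose $n$ large enough so that $KAC/M_n \leq \varepsilon/2$.

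I do not foresee any genuine obstacle: the argument is routine truncation bookkeeping. The only subtlety is the order of quantifiers, with $A$ chosen before $n$, so that the uniform integrability from \eqref{W2} can be exploited at a fixed $A$ while $M_n \to \infty$ absorbs the leftover $KAC/M_n$ contribution.
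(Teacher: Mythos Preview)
Your proof is correct. For \eqref{Jam651} you follow essentially the same route as the paper: reduce to $m_k<K$, use Markov's inequality in the form $A\,\mathbb{P}(|X|\geq A)\leq \mathbb{E}[|X|\Ind{|X|\geq A}]$, then replace the threshold $M_n/m_k$ by the smaller $M_n/K$ and invoke \eqref{W2}. For \eqref{Jam652} your argument is genuinely different from the paper's. The paper expresses the truncated second moment via integration by parts as
\[
\int_0^T x^2\,dF_{k,m}(x)\;=\;-T^2\,\mathbb{P}(|X_k(m)|\geq T)\;+\;2\int_0^T x\,\mathbb{P}(|X_k(m)|\geq x)\,dx,
\]
divides by $T=M_n/m_k$, and controls both terms using the consequence $\sup_{k,m} T\,\mathbb{P}(|X_k(m)|\geq T)\to 0$ of \eqref{W2} (the second term via a Ces\`aro-type argument). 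You instead split $\mathbb{E}[Y_k^2]$ at an auxiliary level $A$, bound the low part by $A\cdot\sup_{k,m}\mathbb{E}|X_k(m)|$ and the high part by $(M_n/m_k)\cdot\sup_{k,m}\mathbb{E}[|X_k(m)|\Ind{|X_k(m)|\geq A}]$, and then choose $A$ before $n$. Your route is more elementary (no integration by parts, no passage through tail-probability integrals) and makes the role of the order of quantifiers completely transparent; the paper's approach has the minor advantage of packaging \eqref{Jam651} and \eqref{Jam652} into a single tail-function estimate \eqref{fgcond}. Your observation that \eqref{C} and \eqref{W1} are not used in the lemma is also correct.
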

By the union bound, the definition of $Y_k(m_k)$, using
that $\sum_{k} \frac{m_k}{M_n} \leq 1$ we have that
\begin{equation}\label{goodtrunc}
\begin{aligned}
&\limsup_n \mathbb{P}\prt{\bar{S}^K_n \neq \bar{s}^K_n} \leq \limsup_n \sum_{k = 1}^n \mathbb{P}\prt{ X_k(m_k)\Ind{m_k<K}\neq Y_k(m_k)} \\
&= \limsup_n\sum_{k = 1}^n \mathbb{P} \prt{\abs{X_k(m)}\Ind{m_k<K}\geq \frac{M_n}{m_k}} \\
&\leq \limsup_n  \max_{1 \leq k \leq n} \frac{M_n}{m_k} \mathbb{P}\prt{\abs{X_k(m_k)}\Ind{m_k<K}\geq \frac{M_n}{m_k}}\sum_{k=1}^n \frac{m_k}{M_n}\\
&\leq \limsup_n  \max_{1 \leq k \leq n} \frac{M_n}{m_k} \mathbb{P}\prt{\abs{X_k(m_k)}\Ind{m_k<K}\geq \frac{M_n}{m_k}},
\end{aligned}
\end{equation}
the latter can be made arbitrary small
via~\eqref{Jam651}. Hence 
it suffices to consider $\bar{s}^K_n$ instead of
$\bar{S}^K_n$. We next control the mean and the
variance of $\bar{s}^K_n$.
\paragraph{The mean.} As $X_k(m_k)$ is uniformly
integrable family of centred random variables,
by~\eqref{maxto0} it follows that
$\limsup_n \sup_{k} \mathbb{E}\crt{\abs{Y_{k}(m_k)}}=0$, from which it follows that
$\lim_n\mathbb{E}\prt{\bar{s}^K_n}=0$.
\paragraph{The Variance.} Similarly,
by independence and \eqref{Jam652} we can estimate
\begin{equation}
\begin{aligned}\label{2cheby}
  \limsup_n\text{Var}\prt{\bar{s}^K_n}
  & = \limsup_n \sum_{k=1}^n \frac{m_k^2}{M_n^2} \text{Var}(Y_k(m_k))\\
& \leq \limsup_n  \sum_{k=1}^n \frac{m_k}{M_n} \max_{1 \leq k \leq n}\frac{m_k}{M_n}\text{Var}(Y_k(m_k))\\
&\leq \limsup_{n} \max_{1 \leq k \leq n}\frac{m_k}{M_n}\mathbb{E}\crt{Y^2_k(m_k)} = 0.
\end{aligned}
\end{equation}
Finally, $\lim_n\mathbb{E}\prt{\bar{s}^K_n}=0$ together
with \eqref{2cheby} and Chebyshev's inequality yield
\begin{equation}\label{cheby}
\limsup_{n}\mathbb{P} \prt{\bar{s}^K_n \geq \gep} \leq \limsup_n\frac{4}{\gep^2}\text{Var}\prt{\bar{s}^K_n} =0.
\end{equation}	
\qed
\paragraph{Proof of Lemma~\ref{truncY}}
Let $\overline{T}_n : = \inf_{1 \leq k \leq n} \frac{M_n}{m_k}\Ind{m_k<K}$.
Since $\lim_n \overline{T}_n = \infty$,
equation~\eqref{Jam651} follows from~\eqref{W2} as
\begin{equation}\label{factor}
\lim_n  \frac{M_n}{m_k} P\prt{\abs{X_k(m)}\geq \frac{M_n}{m_k}} \leq \lim_n \mathbb{E}\crt{\abs{X_k(m)}\Ind{\abs{X_k(m)}\geq \overline{T}_n}} = 0.
\end{equation}
To prove~\eqref{Jam652},
let $F_{k,m}(a): = \mathbb{P}(\abs{X_k(m)}<a)$ and note
first that integration by parts yields
\begin{equation}\begin{aligned}\label{Iparts}
& \int_0^T x^2 \,dF_{k,m}(x)
= T^2 \mathbb{P}(\abs{X_k(m)}<T) - 2 \int_0^T x \mathbb{P}(\abs{X_k(m)}<x)\, dx\\
&=   T^2 \crt{1 -  \mathbb{P}(\abs{X_k(m)}\geq T)} - 2 \int_0^T x \crt{1-\mathbb{P}(\abs{X_k(m)}\geq x)}\, dx\\
& = -T^2 \mathbb{P}(\abs{X_k(m)}\geq T) +  2 \int_0^T x \mathbb{P}(\abs{X_k(m)}\geq x)\, dx.
\end{aligned}
\end{equation}
Observe further that
\begin{equation}\label{translate}
  \int_0^{\frac{M_n}{m_k}} x^2 \,dF_{k,m}(x)
  = \mathbb{E}\crt{Y^2_k(m_k)},
\end{equation}
and that by the uniform integrability \eqref{W2}
\begin{equation}\label{fgcond}
  \lim_{T \to \infty}\sup_{k,m} T \mathbb{P}(\abs{X_k(m)}\geq T) = 0.
\end{equation}
By \eqref{fgcond} and \eqref{Iparts}, it follows that
\begin{equation}\label{conclu}
  \lim_{n \to \infty} \max_{1 \leq k \leq n}\frac{m_k}{M_n}
  \mathbb{E}\crt{Y^2_k(m_k)} \leq \lim_{n\to \infty}
  \lim_{T \to \infty}\sup_{k \leq n ,m}
  \frac{1}{T}\int_0^T x^2 \,dF_{k,m}(x) = 0.
\end{equation}
\qed

\subsection{Strong law of large numbers for the incremental sum}

\label{sss:incremental}
As in \cite{ACdCdH19}, our proof here relies on an
iterative
scale decomposition into ``small'' and ``big''
increments.
At each scale, the small contribution is defined as the
truncated sum that,
thanks to the stochastic domination
assumption~\eqref{S2}, can be dealt with the techniques
of \cite{Roh71}.
What is left, classified as ``big'', 
is again split (in the next scale) into a ``small'' and
a ``big''.
At this level, the small one is controlled in the same
way as before.
The iteration proceeds until we reach a scale where the
condition \eqref{S1} is sufficient to ensure convergence.
The proof is organized as follows.
We first iteratively decompose the sum $S_n$ into a
finite number of sums of relatively small increments
and one sum of large increments.
Next we show that the large increment sum converges to
zero almost surely.
Finally we prove that each of the small increments also
converge to zero almost surely.
\paragraph{The recursive decomposition.}
We take $\delta$ from~\eqref{S1} and $\gamma$
from~\eqref{S2} and
fix $K = K(\delta,\gamma) \in \mathbb{N}$ such that
\begin{equation}\label{Kgrande}
  \delta K> 1, \quad \text{ and } \quad   \frac{K}{K-1}< 1 + \gamma
\end{equation} 
Now, we let $M^0 := \mathbb{N}$, and let
\begin{equation}\label{i0dec}
  M^{0,s}: = \{j \in M^0 \colon m_j \leq 1\}
\end{equation}
and 
$M^1:=M^0 \setminus M^{0,s}.$
For $i \geq 1$, given $M^i$, if $M^i$ is finite for
$j >i$ set $M^{j},M^{j,s} = \emptyset$, if $M^i$
is an infinite set, let $k^i:\mathbb{N} \to M^i$
be an increasing map with $k^i(\mathbb{N}) = M^i$.
define the $(i)$-st small increments by
\begin{equation}\label{recursivei}
M^{i,s} :=  \chv{k^i_j \in \mathbb{N} \colon  m_{k^{i}_j} < j^{i/K}},
\end{equation}
and define the next level (large) increments
$M^{k+1}:=M^i \setminus M^{i,s}$.
Now,  we let the cardinality of indices
less than $n$ be denoted by
\begin{equation}\label{cardin}
  J(i;n): = \#\chv{j \in M^{i}\colon j \leq n}
  \quad \text{ and }
  \quad
  J(i,s;n): = \#\chv{j \in M^{i,s}\colon j \leq n}.
\end{equation}
To ease the notation set $X_k := X_{k}(m_k)$,
$a^i_{j,n}: = \frac{m_{k^i_j}}{M_n}$, and
$a^{i,s}_{j,n} = \frac{m_{k^{i,s}_j}}{M_n}$.
Since
$
\mathbb{N} = \bigcup_{i=0}^{\Kk} M^{i,s}\cup M^{\Kk},
$
we have that
\begin{equation}\label{e:ssl} 
\begin{aligned}
S_n &= \sum_{i=1}^{\Kk}\underbrace{\sum_{j=1}^{J(i,s;n)} a^{i,s}_{j,n} X_{k^{i,s}_j}} + \underbrace{\sum_{{j=1} }^{J({\Kk};n)}a^{\Kk}_{j,n} X_{k^K_j}}\\
& =\sum_{i=1}^{\Kk}\hspace{1.1cm} S_n^{i,s} \hspace{0.8cm}+ \hspace{1cm}S_n^{\Kk}.
\end{aligned}
\end{equation}
In what follows we show that
\begin{align}
\mathbb{P}\big(\limsup_n S_n^{\Kk} = 0\big)&=1, \label{large0}\\
\mathbb{P}\big(\limsup_n S_n^{i,s} = 0\big)&=1 \text{ for }i\in \{0,1,\ldots, \Kk\}\label{small0}.
\end{align}

\paragraph{The large increments sum.}
To prove \eqref{large0} it is enough to
show that for any $\gep > 0$
\begin{equation}
\mathbb{P}\big(\limsup_n S_n^{\Kk} \leq \gep\big)=1 \label{largeeps}.
\end{equation}
By \eqref{S1}, and the fact that
$m_{k^{\Kk}_j}\geq j^K$, it follows that
\[
\mathbb{P}(X_{k^{\Kk}_j}>\gep) \leq \frac{C_\gep}{(m_{k^{\Kk}_j})^\delta}\leq \frac{C_\gep}{j^{K\delta}}.
\]
Since $K \delta>1$, we conclude that 
$\sum_{j=1}^\infty \mathbb{P}(X_{k^{\Kk}_j}>\gep) <\infty$, 
and by the Borel Cantelli Lemma, we have that
\begin{equation} \label{smallX}
\mathbb{P}\prt{\limsup_j \abs{X_{k^{\Kk}_j} } \leq \gep}=1.
\end{equation}
As $M_n \to \infty$ and $\sum_{j=1}^{J({\Kk},n)}m_{k^{\Kk}_j} \leq M_n$, we conclude that \eqref{largeeps} holds.
\paragraph{The small increment sums.} 
The proof of~\eqref{small0} will be split in two parts,
first we prove it for $i \geq 1$ and then we treat the
case $i = 0$. 
For notation ease, set for any $J \in \mathbb{N}$,
\[ \tilde{m}_j :=m_{k^{i,s}_j} ,
 \quad
\tilde{M}_J : = \sum_{j = 1}^ J \tilde{m}_j,
 \quad
\tilde{a}_{j,J}: = \frac{\tilde{m}_j}{\tilde{M}_J},
 \quad
\text{and let}
 \quad
 \tilde{S}_J = \sum_{j = 1}^ J \tilde{a}_{j,J} X_{k^{i,s}_j}.
\]
Now note that for any $n$
\begin{equation}\label{tilde_i}
S^{i,s}_{n} = \frac{\tilde{M}_{J(i,s;n)}}{M_n}\tilde{S}_{J(i,s;n)}.
\end{equation}
As $\frac{\tilde{M}_{J(i,s;n)}}{M_n} \leq 1$, it follows
that
$\limsup_n \vert{S^{i,s}_n}\vert \leq \limsup_J \vert{\tilde{S}_J}\vert$.
Therefore, it suffices to show that 
\begin{equation}\label{tildeto0}
  \mathbb{P}\prt{\lim_J \tilde{S}_J= 0}=1.
\end{equation}
Since for $i \geq 1$, $m_{k^{i,s}_j}\in [j^{(i-1)/K}, j^{i/K}]$, there are $C,c>0$ for which
\begin{equation}\label{e:bgs} 
\tilde{M}_{J} \geq cJ^{1 +( i-1)/K}, \qquad \tilde{a}_{j,J}\leq \frac{C}{ J^{\frac{K-1}{K} }}.
\end{equation}
Now, as $\lim_J \tilde{a}_{j,J} = 0$, $\sum_{j} \tilde{a}_{j,J} =1$, and conditions \eqref{e:bgs} and \eqref{S2} hold,
one can apply Theorem 2 in~\cite{Roh71} with
$\nu  =\frac{K-1}{K}$ to obtain~\eqref{tildeto0} and
therefore \eqref{small0} for $i \geq 1$. 
To conclude the proof of Theorem~\ref{incrementalstrong}
it remains to verify that $S^{0,s}_n$ converges to $0$
almost surely.
The proof is an adaptation of Theorem 4
in~\cite{JamOrePru65} and is postponed to  Appendix \ref{boundedapp}.

\qed
\subsection{Strong law for the gradual sum}
\label{sss:gradual} 
In this section we prove Theorem~\ref{gradualstrong}. 
Recall the decomposition of $\mathcal{S}_t$
from~\eqref{gradual}.
We note that $\mathcal{S}_t$ is a convex combination of $S_{\ell_t}$ and the boundary term $X_{\ell_t}(\bar{t})$ with  with $\bar{t} = t - M_{\ell_t}$.
By the proof of~\eqref{incrementalstrong},
To prove Theorem~\ref{gradualstrong},
it remains to show that the boundary term vanishes, i.e.
\begin{equation}
\label{goalgradual}
\mathbb{P}\prt{\lim_t \frac{\bar{t}}{t} X_{\ell_t} (\bar{t}) = 0}=1.
\end{equation}
We divide the proof of \eqref{goalgradual} in two steps.
First we show \eqref{goalgradual} for a properly
defined  small  increments, $m_{k+1}< (1 + \alpha_k)M_k$.
Then we show  \eqref{goalgradual} for the complement
set that we refer to as the set  of large increments.
\subsubsection{The small Increments}\label{small_inc}
Let $V_n = \sup \big\{\frac{s}{(M_n + s)}\big \vert X_k(s)\big \vert \colon s \in [0,m_k) \big \}$
and note that
\begin{equation}
  \label{e:sr}
  \limsup_t \frac{\bar{t}}{t}X_{\ell_t}(\bar{t})
  = \limsup_n V_n.
\end{equation}
Thanks to condition~\eqref{S3},
we can control the oscillations $V_n$ for small
increments that satisfy a growth condition defined as follows.
Fix  $\beta>1$ as in \eqref{S3} and let
$\alpha_j = \frac{1}{j^a}$ with $a\in(1/\beta,1)$.
The first small increment is defined by
\begin{equation}\label{small_1}
k_1': = \inf\{k \in \mathbb{N} \colon M_{k+1} < (1 + \alpha_1) M_k\},
\end{equation}
and define recursively for $j$-th small increment by
\begin{equation}\label{small_j}
k'_{j+1} := \inf\chv{k \in \mathbb{N} \colon k > k'_j, \quad M_{k+1} < (1 + \alpha_{j+1}) M_k } .
\end{equation}
If for some $j, k_{j} = \infty$ this implies there are
only finitely many small increments and we do not need
to worry about them in \eqref{e:sr}.
If for all $j, k_{j}' < \infty$, we claim that almost
surely
\begin{equation}
  \label{e:srstar}
  \limsup_j V_{k'_j} = 0.
\end{equation}
Indeed, as $m_{k'_j + 1}< \alpha_j M_{k'_j}$,
by \eqref{S3}, with $r= 0$,  it follows that for any
$\gep>0$
\begin{equation}
  \label{e:nod}
\mathbb{P}(V_{k'_j}> \gep ) \leq \mathbb{P}\crt{\sup_{s\leq \alpha_j M_{k'_j}} s\vert X_{k'_j}(s)\vert >\gep M_{k'_j} }\leq \alpha_j^\beta C_\gep .
\end{equation}
As $\sum_j \alpha_j^\beta< \infty$, by the
Borel-Cantelli lemma we conclude that
\begin{equation}
\label{e:ndc} 
 \quad\mathbb{P} \prt{\limsup_j  V_{k'_j} \leq \gep} = 1,
\end{equation}
and since $\gep>0$ is arbitrary, it follows that
\begin{equation}
  \label{e:ndc0} 
 \mathbb{P} \prt{\lim_j  V_{k'_j} =0} = 1.
\end{equation}
\qed
\subsubsection{The large increments}\label{large_inc}
By \eqref{e:ndc0} we can restrict our attention to
$\chv{k^*_1,k^*_2,\ldots}= \mathbb{N}\setminus \chv{k'_1, k'_2, \ldots}$.
Note that since $\alpha_j \leq 1$
\begin{equation}\label{lbalpha}
(1 + \alpha_j) \geq C \exp(\alpha_j/2),
\end{equation}
for some $C>0$.
Therefore, for some $c_a>0$ the following growth
condition holds for the terms in the sequence
$\chv{M_{k_i^*}}_i$
\begin{equation}\label{e:grc}
  M_{k_{i}^*}\geq \prod_{j=1}^i(1 + \alpha_j)M_1
  \geq C\exp(\sum_{j=1}^i\frac{ \alpha_j}{2})M_1
  \geq \exp(c_ai^{1-a})M_1.
\end{equation}
The proof now proceeds in two steps, we first show
that the boundary term
$\frac{\bar{t}}{t} X_{\ell_t}(\bar{t})$
converges to zero along a subsequence
$\big\{t_{i,j}, i,j \in \mathbb{N}\cup\{0\}\big\}$,
what we call \emph{pinning}, and then based on this
result we show that the full sequence converges to zero
as we bound its oscillations on the intervals
$[t_{i,j} , t_{i,j+1}]$.  
\paragraph{Pinning.}
For the  boundary increments
$k \in \chv{k^*_1,k^*_2,\ldots}$ consider the
following pinning procedure.
Let $k^*_0 = i(k^*_0) = 0$
and define recursively for $n\in \mathbb{N}$
\[
i(k^*_{n}):= \inf \{i> i(k^*_{n-1}) \colon \prod_{j = i(k^*_{n-1})}^i (1 + \alpha_j)M_{k^*_n} > M_{k^*_n + 1}\}.
\]
We note that \eqref{lbalpha} and
$\sum_j \alpha_j = \infty$ imply that
$i(k^*_n)< \infty$ for all $n$.
Now to define the pinning sequence let
$t_{i,0}: = k^{*}_{i}$ and for
$j\in \{0 , \ldots, i(k^*_i) - i(k^{*}_{i-1})\}$
set
\begin{equation}\label{e:psec}
  t_{i,j} :=
  \begin{cases}
    (1+\alpha_{i(k^*_{i-1})+ j})t_{i,j-1} & \text{ if } j < i(k^*_i)- i(k^{*}_{i-1})\\
   M_{k^*_i + 1} & \text{ if } j =  i(k^*_i)- i(k^{*}_{i-1}).
  \end{cases}
\end{equation}
Now it follows from the definition of $\bar{t}$ that 
\begin{equation}\label{e:pig}
  \bar{t}_{i,j} =  t_{i,j} - M_{k^*_i} = M_{k^*_i}[\prod_{n =1}^{j}(1 + \alpha_{i(k^*_{i-1})+n})-1].
\end{equation}
By the polynomial decay  in \eqref{S1} it follows that
for any $\gep>0$, and $i,j>0$
\begin{equation}\label{e:bp}
\mathbb{P}\crt{\abs{\frac{\bar{t}_{i,j}}{t_{i,j}}X_{k^*_i}(\bar{t}_{i,j})} \geq \gep} \leq \mathbb{P}\crt{\abs{X_{k^*_i}(\bar{t}_{i,j})} \geq \gep} \leq \frac{C_\gep}{\prt{\bar{t}_{i,j}}^\delta}.
\end{equation}
By~\eqref{e:pig} and~\eqref{e:grc}, the sum over
$i,j>0$ of the above probability is finite and
therefore for any $\gep >0$
\begin{equation}\label{e:pin}%
  \mathbb{P}\crt{\frac{\bar{t}_{i,j}}{t_{i,j}}  \abs{X_{k^*_i}(\bar{t}_{i,j})} \geq \gep \text{ for infinitely many } (i,j)} = 0,
\end{equation}
which implies that
\begin{equation}\label{e:pin0}%
\mathbb{P}\crt{\limsup_{i,j} \frac{\bar{t}_{i,j}}{t_{i,j}}    \abs{X_{k^*_i}(\bar{t}_{i,j})}=0} = 1.
\end{equation}
It remains to understand the behaviour of the boundary
term in $[t_{i,j}, t_{i,j+1}]$.
\paragraph{Oscillations.}
Now we use~\eqref{S3} to compute the oscillations
between the pinned values of the boundary.
Fix $\gep>0$ and consider the event $\Omega_{i_0}$
defined by 
\begin{equation}\label{pinned}
\Omega_{i_0}:= \Big\{\sup_j\abs{\frac{\bar{t}_{i,j}}{t_{i,j}}X_{k^*_i}\prt{\bar{t}_{k^*_i,j}}} 
\leq \gep, \quad \text{ for } i >i_0 \Big\}.
\end{equation}
Therefore, on $\Omega_{i_0}$ for $t \in [t_{i,j}, t_{i,j+1}]$, and   $j \geq 1$
\begin{align*}
&\abs{\frac{\bar{t}}{t} X_{k^*_i}\prt{\bar{t}} -\frac{\bar{t}_{i,j}}{t_{i,j}} X_{k^*_i}\prt{\bar{t}_{i,j}}}\\
&= \abs{\frac{1}{t}\crt{\bar{t}  X_{k^*_i}\prt{\bar{t}} -  \bar{t}_{i,j} X_{k^*_i}\prt{\bar{t}_{i,j}}} + \prt{\frac{\bar{t}}{t} - \frac{\bar{t}_{i,j}}{t_{i,j}}}X_{k^*_i}\prt{\bar{t}_{i,j}}}\\
&\leq \frac{1}{t}\abs{{\bar{t}  X_{k^*_i}\prt{\bar{t}} -  \bar{t}_{i,j} X_{k^*_i}\prt{\bar{t}_{i,j}}}}
+ (C_\alpha-1) \gep
\end{align*}
where 
\begin{equation}
  C_\alpha = \sup_{i,j}\sup_{t \in [t_{i,j}, t_{i,j+1}]}\frac{\frac{\bar{t}}{t}}{\frac{\bar{t}_{i,j}}{t_{i,j}} }
  < \infty 
\end{equation}
Let $s: = t - t_{i,j}$. By \eqref{S3} it follows that
on $\Omega_{i_0}$
\begin{equation}
\label{e:boit}
\mathbb{P} \crt{\sup_{s \leq t_{i,j+1} - t_{i,j} }
  \abs{\frac{\bar{t}_{i,j} + s}{t_{i,j} + s}X_{k^*_i}{(\bar{t}_{i,j} + s)}-\frac{\bar{t}_{i,j}}{t_{i,j}}X_{k^*_i}{(\bar{t}_{i,j})}}>C_\alpha\gep}
\leq \alpha^\beta_{i,j}C_\gep.
\end{equation}
As the sum of the above terms over $i, j\in \mathbb{N}$
is finite, by \eqref{pinned}, it follows that 
\begin{equation}\label{e:bcit}
  \mathbb{P} \crt{\limsup_k V_k \leq C_\alpha\gep}  \geq  \limsup_{i_0} \mathbb{P}(\Omega_{i_0}) = 1
\end{equation}
Since $\gep >0$ is arbitrary, from~\eqref{e:sr},
\eqref{e:ndc} and~\eqref{e:bcit}
we conclude that~\eqref{goalgradual} holds.
\qed
\appendix
\section{Bounded increments}\label{boundedapp}
To deal with the case $i = 0$, in the case of small
incrementys defined in \eqref{i0dec},
if $\lim_{n}\sum_{i=1}^n m^{i,0}_n<\infty$ it follows
that $S^{i,0}_n $ converges to $0$.
For this reason assume without loss of generality that
$m_k = m^{i,0}_k$ and that
\begin{equation}\label{diverge0s}
\lim_n M_n =\lim_n \sum_{i=1}^n m^{i,0}_n \to \infty.  
\end{equation}
We next consider the truncated versions of $X_k$
\begin{equation}\label{truncatedY}
  Y_k: = X_k \Ind{\big\{\vert{X_{k}\vert}\leq \frac{M_k}{m_k}\big \}}.
\end{equation}
The proof proceeds in two steps: first we show that
\begin{equation}\label{eventualequal}
  \mathbb{P} \prt{Y_{k} \neq X_{k} \text{ i.o.} } = 0.
\end{equation}
This implies that  the limit of $S_n $ equals the limit
of $\bar{S}_n: = \sum_{k = 1}^n a_{n,k}Y_{k}$.
The proof will be complete once we prove that
\begin{equation}\label{limit_trunc}
  \mathbb{P} \prt{\lim_n\bar{S}_n=0 } = 1.
\end{equation}
\paragraph{Proof of \eqref{eventualequal}.}
Let $ N(x): = \chv{k \colon\frac{M_k}{m_k} \leq x }$,
$F^*(a): = \mathbb{P}(\abs{X^*}<a)$, and note that by
the stochastic domination \eqref{S2}
\begin{equation}
  \begin{aligned}
    \sum_{k} \mathbb{P} \prt{Y_{k} \neq X_{k}}
    &\leq \sum_k \mathbb{P}\prt{\abs{X_{k}}
      \geq \frac{M_k}{m_k} }\leq \sum_j \mathbb{P}\prt{\abs{X^*}
      \geq \frac{M_k}{m_k} }\\
    & \leq \sum_j \int_{x \geq \frac{M_k}{m_k}} \,d F^*(x)= \int N(x) \,dF^*(x) = \mathbb{E}\crt{N(\abs{X^*})}.
  \end{aligned}
\end{equation}
To obtain \eqref{eventualequal} it remains to prove that
\begin{equation}\label{remainsN}
  \mathbb{E}\crt{N(\abs{X^*})}<\infty.
\end{equation}
This step follows from Lemma 2
of~\cite{JamOrePru65} which states that
\begin{equation}\label{limsupN}
  \limsup \frac{N(x)}{x \log x}\leq 2.
\end{equation}
By~\eqref{limsupN},
it follows that $N(x) \leq C x^{1 + \gamma}$ and
by~\eqref{S2}, $\mathbb{E}\crt{N(\abs{X^*})} < \infty$.
\qed
\paragraph{Proof of \eqref{limit_trunc}}
As
\[
\lim_n \mathbb{E}\crt{\bar{S}_n} = 0,
\]
to prove \eqref{limit_trunc} it suffices to show that
\begin{equation}\label{varvanish}
\sum_k \frac{m^2_k}{M^2_k} \text{Var}(Y_{k})  < \infty.
\end{equation}
As $\frac{M_k}{m_k} \to \infty$ it follows that there
is a $C$ such that
\[
  \mathbb{E}\crt{Y_k^2} \leq  C \int_{\abs{x}\leq \frac{M_k}{m_k}} x^2 \, d F^*(x).
\]
Therefore, the sum in \eqref{varvanish} can be bounded
by
\begin{equation}
  C \sum_k \frac{m^2_k}{M^2_k}\int_{\abs{x}<\frac{M_k}{m_k}}x^2 \, dF^*(x)
  = C\int x^2 \sum_{k \colon \frac{M_k}{m_k}\geq \abs{x}}\frac{m^2_k}{M^2_k} d F^*(x).
\end{equation}
To complete the proof it remains to show that the
right-hand side above is finite.
This follows from the following claims whose proofs are
given right after:
\begin{equation}\label{claim1}
  \sum_{k \colon \frac{M_k}{m_k}\geq \abs{x}}\frac{m^2_k}{M^2_k}
  \leq 2 \int_{y \geq \abs{x}}\frac{N(y)}{y^3}\, dy,
\end{equation}
and 
\begin{equation}\label{claim2}
\int x^2 \int_{y \geq \abs{x}}\frac{N(y)}{y^3}\, dy\, dF^*(x)< \infty.
\end{equation}
\qed
\paragraph{proof of \eqref{claim2}}
By \eqref{limsupN}
it follows that there are $C>0$ and $\gamma \in (0,1)$
such that $N(x)\leq Cx^{1 + \gamma}$. Therefore
\begin{equation}
\begin{aligned}  
&\int x^2 \int_{y \geq \abs{x}}\frac{N(y)}{y^3}\, dy\, dF^*(x)\leq  \int x^2 \int_{y \geq \abs{x}}\frac{Cy^{1 + \gamma}}{y^3}\, dy\, dF^*(x)\\
& = \int x^2 \int_{y \geq \abs{x}}\frac{C}{y^{2 - \gamma}}\, dy\, dF^*(x) = \int x^{2} \frac{C}{(1 - \gamma)x^{1 - \gamma}}\, d F^*(x)\\
& = \frac{C}{1 - \gamma}\mathbb{E}\crt{\abs{X^*}^{1 + \gamma}}< \infty.
\end{aligned}  
\end{equation}    
\qed           

\paragraph{proof of \eqref{claim1}}
Observe that by the definition of $N$ and integration
by parts
\begin{equation}
\begin{aligned}
\sum_{k \colon \abs{x}< \frac{M_k}{m_k}\leq z} \frac{m^2_k}{M^2_k}&= \int_{\abs{x}<y <z} \frac{dN(y)}{y^2}\\
&= \frac{N(z)}{z^2} - \frac{N(\abs{x})}{x^2}
+ 2 \int_{\abs{x}<y<z}\frac{N(y)}{y^3}\, dy.
\end{aligned}
\end{equation}
Furthermore, since $N(z) \leq N(y)$ for $z \leq y$ and
$\frac{1}{z^2} = 2\int_z^\infty \frac{1}{y^3}\, dy$ it
follows from \eqref{claim2} that 
\[
\frac{N(z)}{z^2} \leq 2\int_z^\infty \frac{N(z)}{y^3}\, dy \to 0,
\]
and so
\begin{equation}
  \begin{aligned}
    \sum_{k \colon \abs{x}< \frac{M_k}{m_k}} \frac{m^2_k}{M^2_k}
    &= \lim_z \sum_{k \colon \abs{x}< \frac{M_k}{m_k}<z} \frac{m^2_k}{M^2_k}
    \int_{\abs{x}<y <z} \frac{dN(y)}{y^2}\\
    &\leq  2 \int_{\abs{x}<y}\frac{N(y)}{y^3}\, dy.
  \end{aligned}
\end{equation}  
\qed
\section*{Acknowledgements}
The research in this paper was partially supported
through NWO Gravitation Grant\\
\mbox{NETWORKS-024.002.003}.
We are grateful to Evgeny Verbitsky for pointing the
paper of Rohatgi~\cite{Roh71}
and related useful discussions.    

\bibliographystyle{plain}
\bibliography{mybib.bib}      
\end{document}